\theoremstyle{plain}
   \newtheorem{teo}{Theorem}
   \newtheorem{coro}[teo]{Corollary}
   \newtheorem{lema}[teo]{Lemma}
\theoremstyle{definition}
\theoremstyle{remark}
 \newtheorem{afirmacion}{Claim}
\numberwithin{equation}{section}
\definecolor{celestialblue}{rgb}{0.29, 0.59, 0.82}
\definecolor{chocolate(web)}{rgb}{0.82, 0.41, 0.12}
\begin{document}
	
	\title[Corrigendum]{Corrigendum to ``From $A_1$ to $A_\infty$: new mixed inequalities for certain maximal operators''}

	% Information for first author
	\author[F. Berra]{Fabio Berra}
	\address{CONICET and Departamento de Matem\'{a}tica (FIQ-UNL),  Santa Fe, Argentina.}
	\email{fberra@santafe-conicet.gov.ar}
	
% 	%Information for second author
%     \author[M. Carena]{Marilina Carena}
%     \address{CONICET and Departamento de Matem\'{a}tica (FIQ-UNL),  Santa Fe, Argentina.}
%     \email{marilcarena@gmail.com}

%     %Information for third author
%     \author[G. Pradolini]{Gladis Pradolini}
%     \address{CONICET and Departamento de Matem\'{a}tica (FIQ-UNL),  Santa Fe, Argentina.}
%     \email{gladis.pradolini@gmail.com}
		
	\thanks{The author was supported by CONICET and UNL}
	
	\subjclass[2010]{26A33, 42B25}
	
	\keywords{Young functions, maximal operators, Muckenhoupt weights, fractional operators}
	
	%\renewcommand{\abstractname}{fabito}
	
	%-------------ABSTRACT ------------------------------------
	\begin{abstract}
		We devote this note to correct an estimate concerning mixed inequalities for the generalized maximal function $M_\Phi$, when certain properties of the associated  Young function $\Phi$ are assumed. 
		
		Although the obtained estimates turn out to be slightly different, they are good extensions of mixed inequalities for the classical Hardy-Littlewood maximal functions $M_r$, with $r\geq 1$. They also allow us to obtain mixed estimates for the generalized fractional maximal operator $M_{\gamma,\Phi}$, when $0<\gamma<n$ and $\Phi$ is an $L\log L$ type function.   
	\end{abstract}

	\maketitle
	
	\section*{Overview}
	
	Throughout this note we shall consider a Young function $\Phi$ with the following properties. Given $r\geq 1$ and $\delta\geq 0$,  we say that a Young function $\Phi$ belongs to the family $\mathfrak{F}_{r,\delta}$ if it is submultiplicative, has lower type $r$ and satisfies the condition
	\[\frac{\Phi(t)}{t^r}\leq C_0 (\log t)^\delta,\quad \textrm{ for }t\geq t^*,\]
	for some constants $C_0>0$ and $t^*\geq 1$.
	
	In \cite{Berra-Carena-Pradolini(MN)} we obtained mixed estimates for the operator $M_\Phi$, where $\Phi$ belongs to $\mathfrak{F}_{r,\delta}$. Concretely, we stated the inequality 
	\[uv^r\left(\left\{x\in \mathbb{R}^n: \frac{M_\Phi(fv)(x)}{v(x)}>t
\right\}\right)\leq C\int_{\mathbb{R}^n}\Phi\left(\frac{|f|}{t}\right)uv^r\]
where $u$ and $v^r$ are weights belonging to the $A_1$-Muckenhoupt class.

Later, in \cite{B22Pot}, the same kind of estimate was obtained when $v^r$ is only assumed to be an $A_\infty$ weight.

In the the proofs of both results we used Claim 3.4 in \cite{Berra-Carena-Pradolini(MN)}, and Claims 1 and 3 in \cite{B22Pot} as auxiliary tools. These claims have an error on a Hölder estimate, where a limiting argument was mistakenly used and it cannot be adapted to obtain the inequality given above. 

The purpose of this note is give a proof of Theorem~1 in \cite{B22Pot} that avoids this step on the claims and allows us to obtain a slightly different estimate, that will still be useful for our purposes. We shall only modify the results obtained in \cite{B22Pot}, since they are more general and the corresponding version of those in \cite{Berra-Carena-Pradolini(MN)} will follow as an immediate consequence. The modified mixed estimate in \cite{B22Pot} is the following.
	
	\begin{teo}[Corrected version of Theorem 1 in \cite{B22Pot}]\label{teo: teorema principal}
		Let $r\geq 1$, $\delta\geq 0$ and $\Phi\in \mathfrak{F}_{r,\delta}$. If $u\in A_1$ and $v^r\in A_\infty$ then there exists a positive constant $\varepsilon_0$  such that the inequality
		\[uv^r\left(\left\{x\in \mathbb{R}^n: \frac{M_\Phi(fv)(x)}{v(x)}>t
		\right\}\right)\leq C\int_{\mathbb{R}^n}\left(\eta_\varepsilon\circ \Phi\right)\left(\frac{|f(x)|}{t}\right)u(x)v^r(x)\,dx\]
		holds for every positive $t$ and every $0<\varepsilon<\varepsilon_0$, where  $\eta_\varepsilon(z)=z(1+\log^+z)^{\delta/\varepsilon}$ and $C$ depends on $\varepsilon$.
	\end{teo}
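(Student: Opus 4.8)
The plan is to prove a \emph{pointwise} Orlicz-type estimate for $M_\Phi(fv)$ --- the statement which in \cite{Berra-Carena-Pradolini(MN)} and \cite{B22Pot} was played by the faulty claims --- and then to globalize it by means of the weak-type behaviour of a weighted Orlicz maximal operator. First the usual reductions: since the Luxemburg functional $\|\cdot\|_{\Phi,Q}$, hence $M_\Phi$, is $1$-homogeneous, replacing $f$ by $f/t$ reduces the inequality to $t=1$; by monotone convergence and truncation we may assume $f\ge 0$, bounded and compactly supported; and since $M_\Phi g\le c_n\max_{1\le k\le 3^n}M_\Phi^{\mathcal D_k}g$ for a suitable finite family of dyadic lattices, it is enough to work with $M_\Phi^{\mathcal D}$ for one dyadic lattice $\mathcal D$.

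\textbf{The corrected local estimate.} Fix a cube $Q$. Testing the Luxemburg norm of $fv$ on $Q$ at height $\lambda=K\langle v^r\rangle_Q^{1/r}s$ and using submultiplicativity, $\Phi(fv/\lambda)\le\Phi(|f|/s)\,\Phi\big(v/(K\langle v^r\rangle_Q^{1/r})\big)$, the lower type $r$, and the growth hypothesis rewritten as $\Phi(\tau)/\tau^r\le C(1+\log^+\tau)^\delta$, one arrives at
\[
\frac1{|Q|}\int_Q\Phi\Big(\frac{fv}{\lambda}\Big)\le\frac{C}{K}\,\frac1{v^r(Q)}\int_Q\Phi\Big(\frac{|f|}{s}\Big)\big(1+\log^+(v/\langle v^r\rangle_Q^{1/r})\big)^\delta\,v^r\,dx .
\]
It is at this point that the original argument failed: rather than cancelling the logarithmic factor by a limiting procedure, I apply the generalized Hölder inequality in Orlicz spaces for the probability measure $d\nu=v^r\,dx/v^r(Q)$ and the conjugate pair $(\eta_\varepsilon,\bar\eta_\varepsilon)$, noticing that $\eta_\varepsilon(z)\approx z(\log z)^{\delta/\varepsilon}$ for large $z$, whose complementary function is of exponential type $\bar\eta_\varepsilon(\tau)\approx\exp(\tau^{\varepsilon/\delta})$. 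The factor carrying the logarithm is then $O(1)$ precisely when $\nu$-averages of $\exp\big(c(1+\log^+(v/\langle v^r\rangle_Q^{1/r}))^{\varepsilon}\big)$ are bounded, and by $(1+x)^\varepsilon\le1+\varepsilon x$ this reduces to the reverse Hölder inequality $v^r\in RH_{1+c\varepsilon/r}$, valid whenever $\varepsilon$ is below a threshold $\varepsilon_0$ coming from the self-improvement of $v^r\in A_\infty$. The other factor $\|\Phi(|f|/s)\|_{\eta_\varepsilon,Q,\nu}$ is $\le 1$ as soon as the $\nu$-average of $(\eta_\varepsilon\circ\Phi)(|f|/s)$ is $\le 1$, because $(\eta_\varepsilon\circ\Phi)(\tau/s)\le s^{-1}(\eta_\varepsilon\circ\Phi)(\tau)$ for $s\ge1$. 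Choosing $K$ appropriately and then $s$ minimal with this last average $\le1$ gives, for $0<\varepsilon<\varepsilon_0$,
\[
M_\Phi^{\mathcal D}(fv)(x)\le C\sup_{Q\ni x}\langle v^r\rangle_Q^{1/r}\,\|f\|_{\eta_\varepsilon\circ\Phi,\,v^r,\,Q},
\]
the last Luxemburg norm being taken with respect to $v^r\,dx$.

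\textbf{Globalization.} If $v^r\in A_1$, then $\langle v^r\rangle_Q\lesssim v(x)^r$ a.e.\ on $Q$, so the last display becomes $M_\Phi^{\mathcal D}(fv)(x)/v(x)\lesssim M_{\eta_\varepsilon\circ\Phi,\,v^r}^{\mathcal D}f(x)$. Decomposing $\{M_{\eta_\varepsilon\circ\Phi,v^r}^{\mathcal D}f>ct\}$ into the maximal dyadic cubes $Q_j$ with $v^r(Q_j)<\int_{Q_j}(\eta_\varepsilon\circ\Phi)(|f|/ct)\,v^r$ and multiplying each by $\operatorname*{ess\,inf}_{Q_j}u$, one gets $uv^r(Q_j)\lesssim\int_{Q_j}(\eta_\varepsilon\circ\Phi)(|f|/ct)\,uv^r$ provided $u$ behaves like an $A_1$ weight relative to $v^r\,dx$ --- which is the role of $u\in A_1$, proved using a reverse Hölder inequality for $v^r$ --- and summing over $j$ yields the theorem. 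When $v^r$ is only in $A_\infty$, the pointwise bound $\langle v^r\rangle_Q\lesssim v(x)^r$ fails; one then inserts, before globalizing, a Calderón--Zygmund/stopping-time step --- slicing $\{M_\Phi^{\mathcal D}(fv)>v\}$ by the dyadic size of $v$ and stopping, on each slice, at the maximal cubes on which $\langle v^r\rangle_Q^{1/r}\|f\|_{\eta_\varepsilon\circ\Phi,v^r,Q}$ matches that size --- and then sums using the reverse Hölder property of $v^r$; the $\Delta_2$ character of $\Phi$ and $\eta_\varepsilon\circ\Phi$, a consequence of submultiplicativity, absorbs the multiplicative constants.

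\textbf{Main obstacle.} The decisive, previously mishandled, step is the local estimate: the logarithmic factor $(1+\log^+(v/\langle v^r\rangle_Q^{1/r}))^\delta$ cannot be erased in a limit, and absorbing it through an honest Orlicz--Hölder pairing genuinely costs a factor $(\log)^{\delta/\varepsilon}$, i.e.\ forces $\Phi$ to be replaced by $\eta_\varepsilon\circ\Phi$ with a constant that blows up as $\varepsilon\downarrow0$. The remaining work is more technical: quantifying $\varepsilon_0$ in terms of the $A_\infty$ constant of $v^r$; establishing that $u\in A_1$ and $v^r\in A_\infty$ jointly give the ``$A_1$ relative to $v^r\,dx$'' property; and carrying out the slicing-and-stopping in the $A_\infty$ case so that the resulting double sum collapses to the single integral on the right-hand side.
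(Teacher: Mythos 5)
Your correction of the faulty step coincides with the paper's: both replace the illegitimate limiting argument by a generalized H\"older inequality on the probability space $(Q,v^r\,dx/v^r(Q))$ with the conjugate pair $\eta_\varepsilon(z)=z(1+\log^+z)^{\delta/\varepsilon}$ and $\bar\eta_\varepsilon(z)\approx e^{z^{\varepsilon/\delta}}$, and both control the exponential norm by the reverse H\"older property of $v^r$ (the threshold $\varepsilon_0$ coming from $A_\infty$ self-improvement). What you do differently is to package the local information as a single pointwise bound
\[
M_\Phi^{\mathcal D}(fv)(x)\le C_\varepsilon\sup_{Q\ni x}\langle v^r\rangle_Q^{1/r}\,\|f\|_{\eta_\varepsilon\circ\Phi,v^r,Q},
\]
whereas the paper proves two level-indexed estimates (Claims~\ref{af: claim 1} and \ref{af: Claim 3}) adapted to the Calder\'on--Zygmund cubes $Q_j^k$ and the families $\Gamma_{\ell,k}$, $\Gamma_{-1,k}$. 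Your single estimate is arguably tidier: applied to a stopping cube $Q_j^k$ with $\|fv\|_{\Phi,Q_j^k}>a^k$, submultiplicativity of $\eta_\varepsilon\circ\Phi$ and its lower type $r$ recover both Claims, and in the $\Gamma_{\ell,k}$ case it even avoids the paper's exponential factor $a^{\ell\varepsilon}$ (your $\ell$-dependence is only polynomial, of order $\ell^{\delta(1+1/\varepsilon)}$), which would remove the constraint $\varepsilon<\varepsilon_2$ in the proof and leave only the reverse-H\"older threshold.

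Where your write-up underdelivers is the globalization for $v^r\in A_\infty$, which in the paper is the bulk of the argument. The phrase ``sums using the reverse H\"older property of $v^r$'' elides three distinct ingredients: (i) Lemma~\ref{lema: condicion Ainf de u} (the L--O--P-type exponential decay $u(E_k\cap Q_j^k)\le c_1e^{-c_2 r\ell}u(Q_j^k)$), which rests on the reverse H\"older inequality for \emph{$u$}, not $v^r$; (ii) the construction of principal cubes $P^\ell$, $P^{-1}$ and the resulting Carleson-type bound via Claims~2 and 4 in \cite{B22Pot}, which makes the double sum collapse to a single integral against $u v^r$; and (iii) the inner Calder\'on--Zygmund decomposition of $v^r\mathcal X_{Q_j^k}$ in the $\Lambda_{-1,k}$ case, which produces the cubes $Q_{j,i}^k$ actually meeting the slice $\{a^k<v\le a^{k+1}\}$. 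Similarly, in the warm-up $v^r\in A_1$ paragraph, the assertion that $u\in A_1$ ``behaves like an $A_1$ weight relative to $v^r\,dx$\ldots\ proved using a reverse H\"older inequality for $v^r$'' is imprecise; what one actually needs and what the paper uses is the $A_\infty$/reverse H\"older property of $u$. None of this is a wrong idea --- it is the same L--O--P machinery the paper runs --- but it is not supplied, so the proposal is a correct reorganization of the local step rather than a self-contained alternative proof.
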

	
	It is not difficult to see that $M_\Phi v\gtrsim v$ when $\Phi$ belongs to $\mathfrak{F}_{r,\delta}$. So we have the following result as an immediate consequence of the theorem above.
	
	\begin{coro}[Corrected version of Corollary 2 in \cite{B22Pot}]\label{coro: corolario del teorema principal - 1}
Under the assumptions in Theorem~\ref{teo: teorema principal}, there exists a positive constant $\varepsilon_0$ such that
\[uv^r\left(\left\{x\in \mathbb{R}^n: \frac{M_\Phi(fv)(x)}{M_\Phi v(x)}>t
	\right\}\right)\leq C\int_{\mathbb{R}^n}\left(\eta_\varepsilon\circ \Phi\right)\left(\frac{|f(x)|}{t}\right)u(x)v^r(x)\,dx\]
	holds for every positive $t$ and every $0<\varepsilon<\varepsilon_0$, where  $\eta_\varepsilon$ is as above and $C$ depends on $\varepsilon$.
\end{coro}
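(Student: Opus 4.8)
The plan is to obtain Corollary~\ref{coro: corolario del teorema principal - 1} directly from Theorem~\ref{teo: teorema principal}, using the pointwise bound $M_\Phi v\gtrsim v$ mentioned just before the statement together with the fact that $u v^r\,dx$ is a measure, hence monotone under set inclusion. The only point that requires a little care is the behaviour of $\eta_\varepsilon\circ\Phi$ under dilations of its argument.

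First I would make $M_\Phi v\gtrsim v$ explicit. Since $v^r\in A_\infty$ it is locally integrable, and because $r\ge 1$, Jensen's inequality gives $\frac{1}{|B|}\int_B v\le\big(\frac{1}{|B|}\int_B v^r\big)^{1/r}<\infty$, so $v\in L^1_{\mathrm{loc}}$. For any ball $B$, convexity of $\Phi$ and Jensen's inequality yield $\frac{1}{|B|}\int_B v\le \Phi^{-1}(1)\,\|v\|_{\Phi,B}$, whence $M_\Phi v(x)\ge (\Phi^{-1}(1))^{-1}Mv(x)\ge (\Phi^{-1}(1))^{-1}v(x)$ for almost every $x$, the last inequality by the Lebesgue differentiation theorem. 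Write $c=(\Phi^{-1}(1))^{-1}>0$.

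Next, from $M_\Phi v\ge c\,v$ a.e.\ I would deduce the inclusion
\[\Big\{x\in\mathbb{R}^n: \frac{M_\Phi(fv)(x)}{M_\Phi v(x)}>t\Big\}\subseteq \Big\{x\in\mathbb{R}^n: \frac{M_\Phi(fv)(x)}{v(x)}>c\,t\Big\},\]
and then apply Theorem~\ref{teo: teorema principal} with $ct$ in place of $t$ to bound the $uv^r$-measure of the larger set by $C\int_{\mathbb{R}^n}(\eta_\varepsilon\circ\Phi)\big(|f(x)|/(ct)\big)\,u(x)v^r(x)\,dx$. To bring this to the desired form one distinguishes two cases: if $c\ge 1$ it suffices that $\eta_\varepsilon\circ\Phi$ is nondecreasing; if $c<1$, submultiplicativity of $\Phi$ (which is part of the definition of $\mathfrak{F}_{r,\delta}$) gives $\Phi(|f|/(ct))\le\Phi(1/c)\,\Phi(|f|/t)$, and the explicit shape of $\eta_\varepsilon$ together with the elementary estimate $1+\log^+(\lambda z)\le(1+\log^+\lambda)(1+\log^+ z)$ yields $\eta_\varepsilon(\lambda z)\le\lambda(1+\log^+\lambda)^{\delta/\varepsilon}\eta_\varepsilon(z)$, so the factor $\Phi(1/c)$ is absorbed into the constant $C$.

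I expect no genuine obstacle here: the corollary is a soft consequence of the theorem. The only step that must be handled with a bit of attention is the final normalization, where both the submultiplicativity of $\Phi$ and the precise ``$L\log L$''-type form of $\eta_\varepsilon$ are used to dominate $(\eta_\varepsilon\circ\Phi)(\lambda z)$ by a constant multiple of $(\eta_\varepsilon\circ\Phi)(z)$; without submultiplicativity this comparison of Young functions would not be available. One should also record that the constant $C$ continues to depend on $\varepsilon$, inherited from Theorem~\ref{teo: teorema principal}.
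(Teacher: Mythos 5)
Your proposal is correct and follows exactly the route the paper intends: the paper notes that $M_\Phi v\gtrsim v$ for $\Phi\in\mathfrak{F}_{r,\delta}$ and then declares the corollary an immediate consequence of Theorem~\ref{teo: teorema principal}, which is precisely the set-inclusion argument you spell out, including the correct treatment of the rescaling factor $c$ via submultiplicativity of $\Phi$ and the elementary bound $1+\log^+(\lambda z)\le(1+\log^+\lambda)(1+\log^+ z)$ to dominate $\eta_\varepsilon(\lambda z)$ by a constant times $\eta_\varepsilon(z)$.
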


Throughout these notes, all references, lemmas and theorems will follow the label given in \cite{B22Pot}.

	\section{Proof of Theorem~\ref{teo: teorema principal}}

	We shall first give some preliminaries in order to proceed with the proof. Recall that $w\in A_\infty$ if there exists a positive constant $C$ such that
	\[\left(\frac{1}{|Q|}\int_Q w\right)\text{exp}\left(\frac{1}{|Q|}\int_Q \log w^{-1}\right)\leq C\]
	for every cube $Q$ in $\mathbb{R}^n$. The smallest constant for which the inequality above holds is denoted by $[w]_{A_\infty}$.
	
	The following lemma will be useful in the sequel. It can be found in \cite{HP13}.
	
	\begin{lema}\label{lema: reverse Holder con constante}
	Let $w\in A_\infty$ and let $r_w=1+\frac{1}{\tau_n[w]_{A_\infty}}$. Then for any cube $Q$ we have
	\[\left(\frac{1}{|Q|}\int_Q w^{r_w}\right)^{1/r_w}\leq \frac{2}{|Q|}\int_Q w.\]
	As a consequence, given any cube $Q$ and a measurable set $E\subseteq Q$ we have that
	\[\frac{w(E)}{w(Q)}\leq 2\left(\frac{|E|}{|Q|}\right)^{\varepsilon_w},\]
	where $\varepsilon_w=1/(1+\tau_n[w]_{A_\infty})$. The constant $\tau_n$ is purely dimensional and can be chosen as $2^{11+n}$.
	\end{lema}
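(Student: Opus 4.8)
\textbf{Proof plan for Lemma~\ref{lema: reverse Holder con constante}.}

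The plan is to first recall (without proof, since it is the cited result from \cite{HP13}) the sharp quantitative reverse Hölder inequality: for $w\in A_\infty$ and $r_w=1+\frac{1}{\tau_n[w]_{A_\infty}}$ one has
\[\left(\frac{1}{|Q|}\int_Q w^{r_w}\right)^{1/r_w}\leq \frac{2}{|Q|}\int_Q w\]
for every cube $Q$. The only thing that actually needs an argument here is the \emph{consequence}, namely the $A_\infty$-type comparison $w(E)/w(Q)\leq 2(|E|/|Q|)^{\varepsilon_w}$ with $\varepsilon_w=1/(1+\tau_n[w]_{A_\infty})$. So the core of the proof is a short application of Hölder's inequality on the set $E\subseteq Q$ followed by the reverse Hölder bound.

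First I would fix a cube $Q$ and a measurable $E\subseteq Q$, and write $\int_E w=\int_Q w\,\chi_E$. Applying Hölder's inequality with exponent $r_w$ and its conjugate $r_w'$ gives
\[\int_E w\leq \left(\int_Q w^{r_w}\right)^{1/r_w}|E|^{1/r_w'}=|Q|^{1/r_w}\left(\frac{1}{|Q|}\int_Q w^{r_w}\right)^{1/r_w}|E|^{1/r_w'}.\]
Next I would insert the reverse Hölder inequality to bound the averaged $L^{r_w}$ norm by $\frac{2}{|Q|}\int_Q w$, obtaining
\[\int_E w\leq 2\,|Q|^{1/r_w}\,\frac{1}{|Q|}\left(\int_Q w\right)|E|^{1/r_w'}=2\left(\frac{|E|}{|Q|}\right)^{1/r_w'}\int_Q w.\]
Dividing both sides by $w(Q)=\int_Q w$ yields $w(E)/w(Q)\leq 2(|E|/|Q|)^{1/r_w'}$. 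It then only remains to identify the exponent: since $r_w=1+\frac{1}{\tau_n[w]_{A_\infty}}$, a direct computation gives
\[\frac{1}{r_w'}=1-\frac{1}{r_w}=1-\frac{\tau_n[w]_{A_\infty}}{\tau_n[w]_{A_\infty}+1}=\frac{1}{1+\tau_n[w]_{A_\infty}}=\varepsilon_w,\]
which is exactly the claimed exponent, completing the proof.

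The argument is essentially routine; the only place requiring any care is the bookkeeping of the powers of $|Q|$ when passing from the raw $L^{r_w}$ norm to the normalized average, and the elementary algebra identifying $1/r_w'$ with $\varepsilon_w$. There is no real obstacle here, since the genuinely hard analytic input—the quantitative reverse Hölder inequality with the dimensional constant $\tau_n=2^{11+n}$—is being imported directly from \cite{HP13} rather than reproved. If one wanted to be fully self-contained, the main difficulty would instead lie in establishing that first displayed inequality, which rests on a Calderón–Zygmund stopping-time argument and a careful tracking of constants, but that is outside the scope of this note.
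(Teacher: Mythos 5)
Your proposal is correct: the paper itself does not reprove this lemma (it simply cites~\cite{HP13}), and your derivation of the ``consequence'' from the quantitative reverse H\"older inequality---apply H\"older with exponents $r_w,r_w'$ on $\int_Q w\chi_E$, insert the averaged reverse H\"older bound, and check that $1/r_w'=1/(1+\tau_n[w]_{A_\infty})=\varepsilon_w$---is the standard argument, with the power-of-$|Q|$ bookkeeping done correctly. This is essentially the same route the source material takes, so there is nothing to add.
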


	Recall that we are dealing with a function $\Phi\in \mathfrak{F}_{r,\delta}$, where $r\geq 1$ and $\delta\geq 0$ are given. Since we are assuming $v^r\in A_\infty$, there exists $\varepsilon_1>0$ such that $v^{r+\varepsilon}\in A_\infty$ for every $0<\varepsilon\leq \varepsilon_1$.
	
	Fix $0<\varepsilon<\min\{\varepsilon_1,\varepsilon_2\}$, where $\varepsilon_2>0$ will be chosen later. Then we have that $v^r\in \text{RH}_s$, where $s=1+\varepsilon/r$. We shall denote $\Psi_\varepsilon=\eta_\varepsilon \circ \Phi$. We shall follow the same sketch and steps as in \cite{B22Pot}, where the entire proof is included for the sake of clearness. Recall that
 it is enough to prove that
	\[uv^r\left(\left\{x\in \mathbb{R}^n: \frac{M_{\Phi,\mathcal{D}}(fv)(x)}{v(x)}>t
\right\}\right)\leq C_\varepsilon\int_{\mathbb{R}^n}\Psi_\varepsilon\left(\frac{|f(x)|}{t}\right)u(x)v^r(x)\,dx,\]
where $\mathcal{D}$ is a given dyadic grid.
We can also assume that $t=1$ and that $g=|f|v$ is a bounded function with compact support. Then, for a fixed number $a>2^n$, we can write
\begin{align*}
  uv^r\left(\left\{x\in \mathbb{R}^n: \frac{M_{\Phi,\mathcal{D}}(fv)(x)}{v(x)}>1
\right\}\right)&=\sum_{k\in \mathbb{Z}} uv^r\left(\left\{x: \frac{M_{\Phi,\mathcal{D}}g(x)}{v(x)}>1, a^k<v\leq a^{k+1}
\right\}\right)\\
&=:\sum_{k\in \mathbb{Z}}uv^r(E_k).  \end{align*}
For every $k\in \mathbb{Z}$ we consider the set
\[\Omega_k=\left\{x\in \mathbb{R}^n: M_{\Phi,\mathcal{D}}g(x)>a^k\right\},\]
and by virtue of the Calderón-Zygmund decomposition of the space (see \cite[Lemma~6]{B22Pot}) there exists a collection of disjoint dyadic cubes $\{Q_j^k\}_j$ that satisfies
\[\Omega_k=\bigcup_j Q_j^k,\]
and $\|g\|_{\Phi,Q_j^k}>a^k$ for each $j$. By maximality, we have
\begin{equation}\label{eq: promedios Luxemburgo de g son como a^k}
a^k<\|g\|_{\Phi,Q_j^k}\leq 2^n a^k, \quad \textrm{ for every }j.
\end{equation}
For every $k\in \mathbb{Z}$ we now proceed to split the obtained cubes in different classes, as in \cite{L-O-P}. Given a nonnegative integer  $\ell$, we set
\[\Lambda_{\ell,k}=\left\{Q_j^k: a^{(k+\ell)r}\leq \frac{1}{|Q_j^k|}\int_{Q_j^k} v^r< a^{(k+\ell+1)r}\right\},\]
and also
\[\Lambda_{-1,k}=\left\{Q_j^k:  \frac{1}{|Q_j^k|}\int_{Q_j^k} v^r< a^{kr}\right\}.\]

The next step is to split every cube in the family $\Lambda_{-1,k}$. Fixed $Q_j^k\in \Lambda_{-1,k}$, we perform the  Calder\'on-Zygmund decomposition of the function $v^r\mathcal{X}_{Q_j^k}$ at level $a^{kr}$. Then we obtain, for each $k$, a collection $\left\{Q_{j,i}^k\right\}_i$ of maximal cubes, contained in $Q_j^k$ and which satisfy
\begin{equation}\label{eq: promedios de v^r sobre Q_{j,i}^k son como a^{kr}}
a^{kr}<\frac{1}{|Q_{j,i}^k|}\int_{Q_{j,i}^k}v^r\leq 2^na^{kr},\quad \textrm{ for every }i.
\end{equation}

Also we define the sets
\[\Gamma_{\ell,k}=\left\{Q_j^k\in \Lambda_{\ell,k}: \left|Q_j^k\cap \left\{x: a^k<v\leq a^{k+1}\right\}\right|>0\right\},\]
and also
\[\Gamma_{-1,k}=\left\{Q_{j,i}^k: Q_j^k\in \Lambda_{-1,k} \textrm{ and } \left|Q_{j,i}^k\cap \left\{x: a^k<v\leq a^{k+1}\right\}\right|>0\right\}.\]

Since $E_k\subseteq \Omega_k$, we can estimate
\begin{align*}
\sum_{k\in \mathbb{Z}} uv^r(E_k)&=\sum_{k\in \mathbb{Z}} uv^r(E_k\cap \Omega_k)\\
&=\sum_{k\in \mathbb{Z}} \sum_j uv^r(E_k\cap Q_j^k)\\
&\leq \sum_{k\in \mathbb{Z}}\sum_{\ell\geq 0} \sum_{Q_j^k\in \Gamma_{\ell,k}}a^{(k+1)r}u(E_k\cap Q_j^k)+\sum_{k\in \mathbb{Z}}\,\, \sum_{i:Q_{j,i}^k\in \Gamma_{-1,k}}a^{(k+1)r}u(Q_{j,i}^k).
\end{align*}

If we can prove that given a negative integer $N$, there exists a positive constant $C_\varepsilon$, independent of $N$ for which the following estimate
\begin{equation}\label{eq: desigualdad con C independiente de N}
\sum_{k\geq N}\sum_{\ell\geq 0} \sum_{Q_j^k\in \Gamma_{\ell,k}}a^{(k+1)r}u(E_k\cap Q_j^k)+\sum_{k\geq N} \sum_{i:Q_{j,i}^k\in \Gamma_{-1,k}}a^{(k+1)r}u(Q_{j,i}^k)\leq C_\varepsilon\int_{\mathbb{R}^n}\Psi_\varepsilon\left(|f|\right)uv^r
\end{equation}
holds, then the proof would be completed by letting $N\to-\infty$.

We shall also need the following lemma from \cite{L-O-P}. We include an adaptation of the proof involving our parameters for the sake of clearness.

\begin{lema}\label{lema: condicion Ainf de u}
    Let $\ell\geq 0$ and $Q_j^k\in \Gamma_{\ell,k}$. If $u\in A_\infty$ and $v^r\in A_q$ for some $1<q<\infty$, then there exists positive constants $c_1$ and $c_2$ depending on $u$ and $v^r$ such that
    \[u(E_k\cap Q_j^k)\leq c_1\,e^{-c_2r\ell}u(Q_j^k).\]
    Furthermore, we can pick $c_1=2\left([v^r]_{A_q}a^r\right)^{1/((q-1)(1+\tau_n[u]_{A_\infty}))}$ and $c_2=\ln a/((q-1)(1+\tau_n[u]_{A_\infty}))$, where $\tau_n$ is the dimensional constant appearing in Lemma~\ref{lema: reverse Holder con constante}.
\end{lema}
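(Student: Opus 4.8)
The plan is to reduce the claimed weighted bound to a purely Lebesgue-measure estimate on $E_k\cap Q_j^k$, and then transfer that estimate to the weight $u$ through the quantitative $A_\infty$ reverse-Hölder bound already recorded in Lemma~\ref{lema: reverse Holder con constante}; this is the argument of \cite{L-O-P}, and the only new content is keeping track of our parameters. Write $Q=Q_j^k$ and $E=E_k\cap Q_j^k$ for brevity.

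First I would extract the two size constraints that describe the situation. On the one hand $E\subseteq\{a^k<v\le a^{k+1}\}$, so $v^r\le a^{(k+1)r}$ on $E$ and hence $v^r(E)\le a^{(k+1)r}|E|$. On the other hand $Q\in\Lambda_{\ell,k}$ gives $v^r(Q)\ge a^{(k+\ell)r}|Q|$. Dividing, one gets
\[\frac{v^r(E)}{v^r(Q)}\le a^{(1-\ell)r}\,\frac{|E|}{|Q|}.\]
Next I would use the hypothesis $v^r\in A_q$ in the elementary form $\frac{|E|}{|Q|}\le [v^r]_{A_q}^{1/q}\bigl(v^r(E)/v^r(Q)\bigr)^{1/q}$, which follows from a single application of Hölder's inequality (with exponents $q$ and $q'$) to $|E|=\int_E (v^r)^{1/q}(v^r)^{-1/q}$ together with the definition of $[v^r]_{A_q}$. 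Combining this with the previous display yields $\bigl(|E|/|Q|\bigr)^q\le [v^r]_{A_q}\,a^{(1-\ell)r}\,|E|/|Q|$, and solving for $|E|/|Q|$ gives
\[\frac{|E|}{|Q|}\le \bigl([v^r]_{A_q}a^r\bigr)^{1/(q-1)}\,a^{-\ell r/(q-1)}.\]

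Finally I would apply the second assertion of Lemma~\ref{lema: reverse Holder con constante} to $w=u$, namely $u(E)/u(Q)\le 2\bigl(|E|/|Q|\bigr)^{\varepsilon_u}$ with $\varepsilon_u=1/(1+\tau_n[u]_{A_\infty})$. Substituting the bound on $|E|/|Q|$, raising to the power $\varepsilon_u$, and rewriting $a^{-\ell r\varepsilon_u/(q-1)}=e^{-c_2 r\ell}$ produces exactly $u(E)\le c_1 e^{-c_2 r\ell}u(Q)$ after noting that $\varepsilon_u/(q-1)=1/((q-1)(1+\tau_n[u]_{A_\infty}))$, which gives the stated $c_1$ and $c_2$.

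The argument carries no genuine obstacle, since both weights' relevant quantitative properties are already packaged in Lemma~\ref{lema: reverse Holder con constante} and in the $A_q$ hypothesis. The one point that deserves care is the exponent bookkeeping in the last two steps so that the final constants come out precisely as advertised — in particular tracking the leftover factor $a^r$ coming from $a^{(1-\ell)r}=a^r a^{-\ell r}$, and checking that the power $\varepsilon_u$ applied to $a^{-\ell r/(q-1)}$ reassembles into the exponential decay $e^{-c_2 r\ell}$ with the asserted $c_2$.
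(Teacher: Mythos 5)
Your argument is correct and follows essentially the same route as the paper: reduce to the Lebesgue-density estimate $|E|/|Q|\le([v^r]_{A_q}a^{(1-\ell)r})^{1/(q-1)}$ via the $A_q$ condition and the level-set/average constraints, then transfer to $u$ using the quantitative reverse-Hölder bound of Lemma~\ref{lema: reverse Holder con constante}. The only (cosmetic) difference is that the paper applies Hölder directly to $\int_{E} v^{-r/(q-1)}v^{r/(q-1)}$ together with $v\le a^{k+1}$ on $E$, whereas you first invoke the standard $A_q$ absolute-continuity inequality $(|E|/|Q|)^q\le[v^r]_{A_q}\,v^r(E)/v^r(Q)$ and then feed in the two density bounds; both rearrangements give identical constants.
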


\begin{proof}
    Since $v^r\in A_\infty$, there exists $q>1$ such that $v^r\in A_q$. Since $Q_j^k\in \Gamma_{\ell,k}$, we have that
    \[\left(\frac{|E_k\cap Q_j^k|}{|Q_j^k|}\right)^{q-1}\leq \left(\frac{1}{|Q_j^k|}\int_{Q_j^K} v^{r(1-q')}\right)^{q-1}a^{r(k+1)}\leq \frac{[v^r]_{A_q}|Q_j^k|}{v^r(Q_j^k)}a^{r(k+1)}\leq [v^r]_{A_q}a^{(1-\ell)r}.\]
    Since $u\in A_1\subseteq A_\infty$, by Lemma~\ref{lema: reverse Holder con constante} and the estimate above we have that
    \[\frac{u(E_k\cap Q_j^k)}{u(Q_j^k)}\leq 2\left(\frac{|E_k\cap Q_j^k|}{|Q_j^k|}\right)^{1/(1+\tau_n[u]_{A_\infty})}\leq 2 \left([v^r]_{A_q}a^{(1-\ell)r}\right)^{1/((q-1)(1+\tau_n[u]_{A_\infty}))}.\]
    From this last inequality we can obtain the thesis.
\end{proof}

\begin{proof}[Proof of Theorem~\ref{teo: teorema principal}]
 Since $u\in A_1$, we have $u\in A_\infty$. Moreover, the assumption $v^r\in A_\infty$ implies that there exists $1<q<\infty$ such that $v^r\in A_q$. We take $\varepsilon_2=r/((q-1)(1+\tau_n[u]_{A_\infty}))$ and $\varepsilon_0=\min\{\varepsilon_1,\varepsilon_2\}$. Fixed $0<\varepsilon<\varepsilon_0$, recall that we have to estimate the two quantities
	\[A_N:= \sum_{k\geq N}\sum_{\ell\geq 0} \sum_{Q_j^k\in \Gamma_{\ell,k}}a^{(k+1)r}u(E_k\cap Q_j^k)\]
	and
	\[B_N:=\sum_{k\geq N} \sum_{i:Q_{j,i}^k\in \Gamma_{-1,k}}a^{(k+1)r}u(Q_{j,i}^k)\]
	by $C_\varepsilon\int_{\mathbb{R}^n}\Psi_\varepsilon\left(|f|\right)uv^r$, with $C_\varepsilon$ independent of $N$.
	
	We shall start with the estimate of $A_N$. Fix $\ell\geq 0$ and let $\Delta_\ell=\bigcup_{k\geq N} \Gamma_{\ell,k}$. We define recursively a sequence of sets as follows: 
	\[P_0^\ell=\{Q: Q \textrm{ is maximal in } \Delta_\ell \textrm{ in the sense of inclusion}\}\]
	and for $m\geq 0$ given we say that $Q_j^k\in P_{m+1}^\ell$ if there exists a cube $Q_s^t$ in $P_m^\ell$ which verifies
	\begin{equation}\label{eq: desigualdad 1 conjunto P_m^l}
	\frac{1}{|Q_j^k|}\int_{Q_j^k} u>\frac{2}{|Q_s^t|}\int_{Q_s^t}u
	\end{equation}
	and it is maximal in this sense, that is,
	\begin{equation}\label{eq: desigualdad 2 conjunto P_m^l}
	\frac{1}{|Q_{j'}^{k'}|}\int_{Q_j^k} u\leq\frac{2}{|Q_s^t|}\int_{Q_s^t}u
	\end{equation}
	for every $Q_j^k\subsetneq Q_{j'}^{k'}\subsetneq Q_s^t$.
	
	Let $P^\ell=\bigcup_{m\geq 0} P_m^{\ell}$, the set of principal cubes in $\Delta_\ell$. By applying Lemma~\ref{lema: condicion Ainf de u} and the definition of $\Lambda_{\ell,k}$ we have that
	\begin{align*}
	\sum_{k\geq N}\sum_{\ell\geq 0} \sum_{Q_j^k\in \Gamma_{\ell,k}}a^{(k+1)r}u(E_k\cap Q_j^k)&\leq\sum_{k\geq N}\sum_{\ell\geq 0} \sum_{Q_j^k\in \Gamma_{\ell,k}}c_1a^{(k+1)r}e^{-c_2\ell r}u(Q_j^k)\\
	&\leq \sum_{\ell\geq 0}c_1e^{-c_2\ell r}a^{r(1-\ell)}\sum_{k\geq N}\sum_{Q_j^k\in \Gamma_{\ell,k}}\frac{v^r(Q_j^k)}{|Q_j^k|}u(Q_j^k).
	\end{align*}
	
	Let us sort the inner double sum in a more convenient way. We define
	\[\mathcal{A}_{(t,s)}^\ell=\left\{Q_j^k \in \bigcup_{k\geq N} \Gamma_{\ell,k}: Q_j^k\subseteq Q_s^t \textrm{ and } Q_s^t \textrm{ is the smallest cube in }P^\ell \textrm{ that contains it} \right\}.\]
	That is, every $Q_j^k\in \mathcal{A}_{(t,s)}^\ell$ is not a principal cube, unless $Q_j^k=Q_s^t$. Recall that $v^r\in A_\infty$ implies that there exist two positive constants $C$ and $\theta$ verifying
	\begin{equation}\label{eq: condicion Ainfty de v^r}
	\frac{v^r(E)}{v^r(Q)}\leq C\left(\frac{|E|}{|Q|}\right)^{\theta},
	\end{equation}
	for every cube $Q$ and every measurable set $E$ of $Q$.
	
	By using \eqref{eq: desigualdad 2 conjunto P_m^l} and Lemma~12 in \cite{B22Pot} we have that
	\begin{align*}
	\sum_{k\geq N}\sum_{Q_j^k\in \Gamma_{\ell,k}}\frac{v^r(Q_j^k)}{|Q_j^k|}u(Q_j^k)&=\sum_{Q_s^t \in P^\ell}\,\,\sum_{(k,j): Q_j^k\in \mathcal{A}_{(t,s)}^\ell}\frac{u(Q_j^k)}{|Q_j^k|}v^r(Q_j^k)\\
	&\leq 2\sum_{Q_s^t \in P^\ell}\frac{u(Q_s^t)}{|Q_s^t|}\,\,\sum_{(k,j): Q_j^k\in \mathcal{A}_{(t,s)}^\ell}v^r(Q_j^k)\\
	&\leq C\sum_{Q_s^t \in P^\ell}\frac{u(Q_s^t)}{|Q_s^t|}v^r(Q_s^t)\left(\frac{\left|\bigcup_{(k,j): Q_j^k\in \mathcal{A}_{(t,s)}^\ell}Q_j^k\right|}{|Q_s^t|}\right)^\theta\\
	&\leq C\sum_{Q_s^t \in P^\ell} \frac{u(Q_s^t)}{|Q_s^t|}v^r(Q_s^t).
	\end{align*}
	
	Therefore,
	\begin{align*}
	\sum_{k\geq N}\sum_{\ell\geq 0} \sum_{Q_j^k\in \Gamma_{\ell,k}}a^{(k+1)r}u(E_k\cap Q_j^k)&\leq C\sum_{\ell\geq 0}e^{-c_2\ell r}a^{-\ell r}\sum_{Q_s^t \in P^\ell} \frac{v^r(Q_s^t)}{|Q_s^t|}u(Q_s^t)\\
	&\leq C\sum_{\ell\geq 0}e^{-c_2\ell r}\sum_{Q_s^t \in P^\ell} a^{tr}u(Q_s^t).
	\end{align*}
	
	\begin{afirmacion}[Corrected version of Claim 1 in \cite{B22Pot}]\label{af: claim 1}
	Given $\ell\geq 0$ and $Q_j^k\in \bigcup_{k\geq N} \Gamma_{\ell,k}$, we have that
		\begin{equation}\label{eq: estimacion de afirmacion: control de akr por promedios de Phi(f). caso l no negativo}
		a^{kr}\leq C\frac{\ell^{\delta/\varepsilon}a^{\ell\varepsilon}}{|Q_j^k|}\int_{Q_j^k}\Psi_\varepsilon\left(|f(x)|\right)v^r(x)\,dx,
		\end{equation}
		where $C$ depends on $\varepsilon$. %$C=4\max\{C_0(\log(2t^*)^\delta),(2\log(2(e^{e^{1/(1-\varepsilon)}}+[v^r]_{\rm{RH}_{s}}^{s})))^{\delta/\varepsilon},((r+\varepsilon)\log a)^{\delta/\varepsilon}\}$.\textcolor{red}{ver si queda}
	\end{afirmacion}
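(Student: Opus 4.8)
The plan is to exploit the two inequalities satisfied by the Calderón--Zygmund cube $Q_j^k$. Since $g=|f|v$ obeys $a^k<\|g\|_{\Phi,Q_j^k}\le 2^na^k$ and $\Phi$ is submultiplicative (so $\Phi(2^nt)\le\Phi(2^n)\Phi(t)$), one gets
\[|Q_j^k|<\int_{Q_j^k}\Phi\!\left(\frac{|f|v}{a^k}\right)\le \Phi(2^n)|Q_j^k|.\]
After multiplying the left inequality by $a^{kr}$, it suffices to bound $a^{kr}\int_{Q_j^k}\Phi(|f|v/a^k)$ by $C_\varepsilon\ell^{\delta/\varepsilon}a^{\ell\varepsilon}\int_{Q_j^k}\Psi_\varepsilon(|f|)v^r$ (understanding $\ell^{\delta/\varepsilon}$ as $1$ when $\ell=0$). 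First I would split $Q_j^k$ along the level sets of $v$, writing $E_{-1}=\{v<a^k\}\cap Q_j^k$ and $E_m=\{a^{k+m}\le v<a^{k+m+1}\}\cap Q_j^k$ for $m\ge0$. On each $E_m$, submultiplicativity together with the growth condition of $\mathfrak{F}_{r,\delta}$ (which gives $\Phi(a^{m+1})\lesssim a^{(m+1)r}(m+1)^\delta$) and the lower type $r$ yield the pointwise estimate $\Phi(|f|v/a^k)\lesssim a^{-kr}(m+2)^\delta v^r\Phi(|f|)$; integrating and summing reduces the claim to
\[\sum_{m\ge-1}(m+2)^\delta\int_{E_m}\Phi(|f|)v^r\lesssim C_\varepsilon\,\ell^{\delta/\varepsilon}a^{\ell\varepsilon}\int_{Q_j^k}\Psi_\varepsilon(|f|)v^r.\]

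For the principal indices $-1\le m\le\ell$ one bounds $(m+2)^\delta\lesssim(\ell+1)^\delta\le(\ell+1)^{\delta/\varepsilon}$ (after shrinking $\varepsilon_0$ so that $\varepsilon<1$) and uses $\Phi\le\Psi_\varepsilon$ (since $\eta_\varepsilon(z)\ge z$), which already gives the required bound for that range. For the tail $m>\ell$ I would split $E_m$ by the size of $f$. On $E_m\cap\{\Phi(|f|)>a^{\sigma m}\}$ with a small fixed $\sigma>0$ one has $1+\log^+\Phi(|f|)\gtrsim m$, whence $(m+2)^\delta\Phi(|f|)\lesssim m^{\delta/\varepsilon}\Phi(|f|)\le C_\varepsilon\,\Phi(|f|)\bigl(1+\log^+\Phi(|f|)\bigr)^{\delta/\varepsilon}=C_\varepsilon\Psi_\varepsilon(|f|)$ --- this is exactly why the logarithmic exponent of $\eta_\varepsilon$ has to be $\delta/\varepsilon$ and why $\Phi$ must be replaced by $\Psi_\varepsilon$ --- and, the $E_m$ being pairwise disjoint, the sum of these pieces is at most $C_\varepsilon\int_{Q_j^k}\Psi_\varepsilon(|f|)v^r$. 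On $E_m\cap\{\Phi(|f|)\le a^{\sigma m}\}$, where no logarithmic gain is available, I would instead use the quantitative reverse Hölder inequality for $v^r\in A_\infty$ from Lemma~\ref{lema: reverse Holder con constante}, namely $v^r(E_m)\le 2\,v^r(Q_j^k)(|E_m|/|Q_j^k|)^{\varepsilon_{v^r}}$, combined with $|E_m|/|Q_j^k|\le a^{(\ell+1-m)r}$ (valid since $Q_j^k\in\Lambda_{\ell,k}$); choosing $\sigma<r\varepsilon_{v^r}$ makes $\sum_{m>\ell}(m+2)^\delta a^{\sigma m}v^r(E_m)$ a convergent geometric tail, bounded by a constant times $(\ell+1)^\delta v^r(Q_j^k)$, with a factor $a^{\ell\varepsilon}$ worth of slack.

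The genuine obstacle is the last step: re-expressing $v^r(Q_j^k)$ in terms of $\int_{Q_j^k}\Psi_\varepsilon(|f|)v^r$. Bounding $v^r(Q_j^k)$ crudely by $a^{(k+\ell+1)r}|Q_j^k|$ loses a factor $\sim a^{\ell r}$ and is useless; the point is to return to the pointwise lower bound $\Phi(|f|v/a^k)\ge (v/a^k)^r\Phi(|f|)$ valid on $\{v\ge a^k\}$ --- a consequence of the lower type $r$, which forces $t\mapsto\Phi(t)/t^r$ to be nondecreasing --- and to combine it with the upper Orlicz estimate $\int_{Q_j^k}\Phi(|f|v/a^k)\le\Phi(2^n)|Q_j^k|$ above in order to trade the relevant $v^r$-mass for $\int_{Q_j^k}\Phi(|f|)v^r\le\int_{Q_j^k}\Psi_\varepsilon(|f|)v^r$. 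This is where the correction lives: one cannot let the reverse Hölder exponent $s=1+\varepsilon/r$ tend to its endpoint $1$ (the illegitimate ``limiting argument'' of the original claims), so one keeps $\varepsilon>0$ fixed, and the price is precisely the factor $\ell^{\delta/\varepsilon}a^{\ell\varepsilon}$ and the passage from $\Phi$ to $\Psi_\varepsilon$. Finally, the standing assumption that $g$ is bounded with compact support guarantees that only finitely many $E_m$ are nonempty, so all the series above are finite, and the resulting constant depends only on $\varepsilon$ (and on the fixed data $n,r,\delta,[u]_{A_1},[v^r]_{A_\infty}$), not on $N$, $k$, $j$ or $g$.
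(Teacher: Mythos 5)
Your reduction to
\[
a^{kr}|Q_j^k|\ \lesssim\ \sum_{m\ge -1}(m+2)^\delta\int_{E_m}\Phi(|f|)\,v^r,
\]
via the level sets $E_m$ of $v$, the submultiplicativity, the lower type $r$, and the growth condition of $\mathfrak F_{r,\delta}$, is sound, and so is the treatment of the range $-1\le m\le\ell$ and of $E_m\cap\{\Phi(|f|)>a^{\sigma m}\}$ for $m>\ell$, where the logarithm of $\Phi(|f|)$ buys back the polynomial weight $(m+2)^\delta$. The genuine gap is exactly the piece you flag: on $E_m\cap\{\Phi(|f|)\le a^{\sigma m}\}$, $m>\ell$, your reverse-H\"older argument yields a bound of the shape $(\ell+2)^\delta a^{(\ell+1)\sigma}\,v^r(Q_j^k)$, and there is no way to pass from $v^r(Q_j^k)$ to $\int_{Q_j^k}\Psi_\varepsilon(|f|)v^r$. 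In fact your proposed repair goes the wrong way: combining $\Phi(|f|v/a^k)\ge (v/a^k)^r\Phi(|f|)$ on $\{v\ge a^k\}$ with $\int_{Q_j^k}\Phi(|f|v/a^k)\le \Phi(2^n)|Q_j^k|$ gives $a^{-kr}\int_{Q_j^k\cap\{v\ge a^k\}}\Phi(|f|)v^r\le \Phi(2^n)|Q_j^k|$, an \emph{upper} bound on $\int\Phi(|f|)v^r$, whereas you need a \emph{lower} bound on $\int\Psi_\varepsilon(|f|)v^r$ in terms of $v^r(Q_j^k)$. Nor can that leftover piece be absorbed into the left-hand side: $v^r(Q_j^k)$ may be as large as $a^{(k+\ell+1)r}|Q_j^k|$, which is bigger than $a^{kr}|Q_j^k|$ by $a^{(\ell+1)r}$, and no admissible choice of $\sigma$ kills that factor.

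The paper's mechanism for precisely this region is different and is the idea your plan is missing. Rather than splitting along level sets of $v$, it writes the bad contribution with a logarithmic weight $w_k=(\log(v/a^k))^\delta\mathcal X_{\{v>t^*a^k\}}$ and applies a generalized H\"older inequality for the conjugate pair $\eta_\varepsilon(z)=z(1+\log^+z)^{\delta/\varepsilon}$ and $\tilde\eta_\varepsilon(z)\approx e^{z^{\varepsilon/\delta}}-e$, \emph{with respect to the measure} $v^r\,dx$:
\[
\frac{1}{|Q_j^k|}\int_{Q_j^k}\Phi(|f|)\,w_k\,v^r\ \le\ \frac{v^r(Q_j^k)}{|Q_j^k|}\,\|\Phi(|f|)\|_{\eta_\varepsilon,Q_j^k,v^r}\,\|w_k\|_{\tilde\eta_\varepsilon,Q_j^k,v^r}.
\]
The exponential conjugate turns $w_k$ into $v^\varepsilon/a^{k\varepsilon}$, which is exactly what the reverse H\"older inequality for $v^r$ (with $s=1+\varepsilon/r$) can control, producing the factor $a^{(\ell+1)\varepsilon}$, while the factor $\|\Phi(|f|)\|_{\eta_\varepsilon,Q_j^k,v^r}$ is converted into $\frac{1}{v^r(Q_j^k)}\int_{Q_j^k}\Psi_\varepsilon(|f|)v^r$ (up to the $\ell^{\delta/\varepsilon}$ factor) through the infimum characterization of the Luxemburg norm at a judicious $\tau\sim a^{-(\ell+1)(r+\varepsilon)}$. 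In this way the unwanted $v^r(Q_j^k)$ cancels and the $\ell^{\delta/\varepsilon}a^{\ell\varepsilon}$ loss appears naturally. Your plan would need to incorporate a duality step of this kind on the bad region; without it, the argument does not close.
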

	By applying the estimate above we obtain
	\begin{align*}
	\sum_{k\geq N}\sum_{\ell\geq 0} \sum_{Q_j^k\in \Gamma_{\ell,k}}a^{(k+1)r}u(E_k\cap Q_j^k)&\leq C_\varepsilon\sum_{\ell\geq 0} e^{-c_2\ell r}\ell^{\delta/\varepsilon}a^{\ell\varepsilon}\sum_{Q_s^t\in P^{\ell}}\frac{u(Q_s^t)}{|Q_s^t|}\int_{Q_s^t}\Psi_\varepsilon\left(|f|\right)v^r\\
	&=C_\varepsilon\sum_{\ell\geq 0}e^{-c_2\ell r}\ell^{\delta/\varepsilon}a^{\ell\varepsilon}\int_{\mathbb{R}^n} \Psi_\varepsilon\left(|f|\right)v^r\left(\sum_{Q_s^t\in P^{\ell}}\frac{u(Q_s^t)}{|Q_s^t|}\mathcal{X}_{Q_s^t}\right)\\
	&=C_\varepsilon\sum_{\ell\geq 0}e^{-c_2\ell r}\ell^{\delta/\epsilon}a^{\ell\varepsilon}\int_{\mathbb{R}^n} \Psi_\varepsilon\left(|f(x)|\right)v^r(x)h_1(x)\,dx\\
	&\leq C_\varepsilon\int_{\mathbb{R}^n} \Psi_\varepsilon\left(|f(x)|\right)v^r(x)u(x)\,dx,
	\end{align*}
	by virtue of Claim 2 in \cite{B22Pot}. Notice that the sum is finite since we are assuming $\varepsilon<\varepsilon_2$. Indeed, we have that
	\[e^{-c_2\ell r}a^{\ell \varepsilon}=e^{-c_2\ell r+\ell \varepsilon\ln a}=e^{\ell(-c_2r+\varepsilon \ln a)},\]
	and this exponent is negative by the election of $\varepsilon$. This completes the estimate of $A_N$.
	
	\medskip
	
		Let us center our attention on the estimate of $B_N$. Fix $0<\beta<\theta$, where $\theta$ is the number appearing in \eqref{eq: condicion Ainfty de v^r}. We shall build the set of principal cubes in $\Delta_{-1}=\bigcup_{k\geq N}\Gamma_{-1,k}$. Let
	\[P_0^{-1}=\{Q: Q \textrm{ is a maximal cube in }\Delta_{-1}\textrm{ in the sense of inclusion}\}\]
	and, recursively, we say that $Q_{j,i}^k\in P_{m+1}^{-1}$, $m\geq 0$, if there exists a cube $Q_{s,l}^t\in P_m^{-1}$ such that
	\begin{equation}\label{eq: desigualdad 1 conjunto P_m^{-1}}
	\frac{1}{|Q_{j,i}^k|}\int_{Q_{j,i}^k} u> \frac{a^{(k-t)\beta r}}{|Q_{s,l}^t|}\int_{Q_{s,l}^t}u
	\end{equation} 
	and it is the biggest subcube of $Q_{s,l}^t$ that verifies this condition, that is
	\begin{equation}\label{eq: desigualdad 2 conjunto P_m^{-1}}
	\frac{1}{|Q_{j',i'}^{k'}|}\int_{Q_{j',i'}^{k'}} u\leq \frac{a^{(k-t)\beta r}}{|Q_{s,l}^t|}\int_{Q_{s,l}^t}u
	\end{equation} 
	if $Q_{j,i}^k\subsetneq Q_{j',i'}^{k'}\subsetneq Q_{s,l}^t$. Let $P^{-1}=\bigcup_{m\geq 0} P_m^{-1}$, the set of principal cubes in $\Delta_{-1}$. Similarly as before, we define the set
	\[\mathcal{A}_{(t,s,l)}^{-1}=\left\{Q_{j,i}^k \in \bigcup_{k\geq N} \Gamma_{-1,k}: Q_{j,i}^k\subseteq Q_{s,l}^t \textrm{ and } Q_{s,l}^t \textrm{ is the smallest cube in }P^{-1} \textrm{ that contains it} \right\}.\]
	
	We can therefore estimate $B_N$ as follows
	\begin{align*}
	B_N&\leq a^r \sum_{k\geq N}\sum_{i:Q_{j,i}^k\in \Gamma_{-1,k}}\frac{v^r(Q_{j,i}^k)}{|Q_{j,i}^k|}u(Q_{j,i}^k)\\
	&\leq a^r\sum_{Q_{s.l}^t \in P^{-1}}\sum_{k,j,i: Q_{j,i}^k\in \mathcal{A}_{(t,s,l)}^{-1}}\frac{u(Q_{j,i}^k)}{|Q_{j,i}^k|}v^r(Q_{j,i}^k)\\
	&\leq a^r\sum_{Q_{s.l}^t \in P^{-1}}\frac{u(Q_{s,l}^t)}{|Q_{s,l}^t|}\sum_{k\geq t}a^{(k-t)\beta r}\,\sum_{j,i: Q_{j,i}^k\in \mathcal{A}_{(t,s,l)}^{-1}}v^r(Q_{j,i}^k).
	\end{align*}
	
	Fixed $k\geq t$, observe that
	\[\sum_{j,i: Q_{j,i}^k\in \mathcal{A}_{(t,s,l)}^{-1}} |Q_{j,i}^k|<\sum_{j,i: Q_{j,i}^k\in \mathcal{A}_{(t,s,l)}^{-1}} a^{-kr}v^r(Q_{j,i}^k)\leq a^{-kr}v^r(Q_{s,l}^t)\leq 2^na^{(t-k)r}|Q_{s,l}^t|.\]
	Combining this inequality with the $A_\infty$ condition of $v^r$ we have, for every $k\geq t$, that
	\begin{align*}
	\sum_{j,i: Q_{j,i}^k\in \mathcal{A}_{(t,s,l)}^{-1}}a^{(k-t)\beta r}v^r(Q_{j,i}^k)&\leq Cv^r(Q_{s,l}^t)\left(\frac{\sum_{j,i: Q_{j,i}^k\in \mathcal{A}_{(t,s,l)}^{-1}}|Q_{j,i}^k|}{|Q_{s,l}^t|}\right)^\theta\\
	&\leq Ca^{(t-k)r\theta}.
	\end{align*}
	Thus,
	\begin{align*}
	B_N&\leq C\sum_{Q_{s.l}^t \in P^{-1}}\frac{u(Q_{s,l}^t)}{|Q_{s,l}^t|}v^r(Q_{s,l}^t)\sum_{k\geq t} a^{(t-k)r(\theta-\beta)}\\
	&=C\sum_{Q_{s.l}^t \in P^{-1}}\frac{v^r(Q_{s,l}^t)}{|Q_{s,l}^t|}u(Q_{s,l}^t)\\
	&\leq C\sum_{Q_{s.l}^t \in P^{-1}}a^{tr}u(Q_{s,l}^t).
	\end{align*}
	\begin{afirmacion}[Corrected version of Claim 3 in \cite{B22Pot}]\label{af: Claim 3}
If $Q_{j}^k\in \Lambda_{-1,k}$ then there exists a positive constant $C_\varepsilon$ such that
		\[a^{kr}\leq \frac{C_\varepsilon}{|Q_j^k|}\int_{Q_j^k}\Psi_\varepsilon\left(|f(x)|\right)v^r(x)\,dx.\]
	\end{afirmacion}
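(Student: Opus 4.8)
Here is my plan for the proof of Claim~\ref{af: Claim 3}.

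\begin{proof}[Sketch of proof of Claim~\ref{af: Claim 3}]
I would first reduce the claim to a single normalized inequality on $Q$. Write $Q=Q_j^k$ and set $w=v/a^k$; recall that $\Psi_\varepsilon(t)=\Phi(t)(1+\log^+\Phi(t))^{\delta/\varepsilon}$. Being a Calder\'on-Zygmund cube of $\Omega_k$, $Q$ satisfies $\|g\|_{\Phi,Q}>a^k$, which by definition of the Luxemburg norm means
\[\frac{1}{|Q|}\int_Q\Phi(|f|\,w)>1.\]
On the other hand $Q\in\Lambda_{-1,k}$ gives $\frac{1}{|Q|}\int_Q w^r<1$, and combining this with the reverse H\"older inequality $v^r\in\text{RH}_{1+\varepsilon/r}$ recorded in the preliminaries (whose constant is controlled by Lemma~\ref{lema: reverse Holder con constante}, hence independent of $k$ and $Q$) we obtain
\[\frac{1}{|Q|}\int_Q w^{r+\varepsilon}\le C.\]
Since substituting $w=v/a^k$ back turns the inequality $1\le\frac{C_\varepsilon}{|Q|}\int_Q\Psi_\varepsilon(|f|)\,w^r$ into the assertion of the Claim with the same constant, it is enough to prove this inequality; we may and do assume $\varepsilon\le 1$, shrinking $\varepsilon_0$ if necessary.

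The plan is to estimate $\int_Q\Phi(|f|w)$ by splitting $Q$ into five pieces, using only submultiplicativity of $\Phi$, the growth bound $\Phi(t)\le C_1 t^r(1+\log^+t)^\delta$ valid for $t\ge 1$ (obtained by combining $\Phi(t)/t^r\le C_0(\log t)^\delta$ for $t\ge t^*$ with the boundedness of $\Phi$ on $[1,t^*]$), and lower type~$r$ in the form $\Phi(st)\le Cs^r\Phi(t)$ for $s\le 1$. I would fix a large number $M$ with $1/M<\varepsilon/4$ and a large constant $A$, to be chosen at the end, and split $Q$ according to whether (i)~$w\le 1$; (ii)~$1<w\le A$; (iii)~$w>A$ and $\Phi(|f|)\le 1$; (iv)~$w>A$, $\Phi(|f|)>1$ and $w\le\Phi(|f|)^M$; or (v)~$w>A$, $\Phi(|f|)>1$ and $w>\Phi(|f|)^M$. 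On (i), lower type gives $\Phi(|f|w)\le C\,w^r\Phi(|f|)\le C\,w^r\Psi_\varepsilon(|f|)$, since $\Psi_\varepsilon\ge\Phi$. On (ii), submultiplicativity together with the growth bound gives $\Phi(|f|w)\le C_1(1+\log A)^\delta\,w^r\Phi(|f|)$. On (iv), the constraint $w\le\Phi(|f|)^M$ forces $1+\log w\le(1+M)(1+\log^+\Phi(|f|))$, hence $(1+\log w)^\delta\le(1+M)^\delta(1+\log^+\Phi(|f|))^{\delta/\varepsilon}$ (this is where $\varepsilon\le1$ enters), so that $\Phi(|f|w)\le C_1(1+M)^\delta\Psi_\varepsilon(|f|)w^r$. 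Integrating, pieces (i), (ii) and (iv) together contribute at most $C_\varepsilon\int_Q\Psi_\varepsilon(|f|)w^r$, with $C_\varepsilon$ depending only on $\varepsilon$, $\delta$, $r$ and the reverse H\"older constant.

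For the two remaining ``error'' pieces (iii) and (v) the point is to show their total contribution is at most $\tfrac12|Q|$ once $A$ is large. On (iii), $\Phi(|f|)\le1$ forces $|f|\le\Phi^{-1}(1)$, so $\Phi(|f|w)\le C\,\Phi(w)$; on (v), $w>\Phi(|f|)^M$ forces $\Phi(|f|)<w^{1/M}$, so $\Phi(|f|w)\le\Phi(|f|)\Phi(w)<w^{1/M}\Phi(w)$. Bounding $\Phi(w)\le C_1 w^r(1+\log w)^\delta\le C_\varepsilon\,w^{r+\varepsilon/4}$ for $w\ge1$, and using $1/M<\varepsilon/4$, on both sets the integrand is at most $C_\varepsilon\,w^{r+\varepsilon/2}$, so each of the two pieces is bounded by
\[C_\varepsilon\int_{\{w>A\}}w^{r+\varepsilon/2}\le C_\varepsilon\,A^{-\varepsilon/2}\int_Q w^{r+\varepsilon}\le C\,C_\varepsilon\,A^{-\varepsilon/2}|Q|<\tfrac14|Q|,\]
the last inequality holding once $A=A_\varepsilon$ is taken large enough. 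Adding the five estimates gives $\frac{1}{|Q|}\int_Q\Phi(|f|w)\le\frac{C_\varepsilon}{|Q|}\int_Q\Psi_\varepsilon(|f|)w^r+\tfrac12$, and since the left-hand side exceeds $1$ we conclude $\frac{1}{|Q|}\int_Q\Psi_\varepsilon(|f|)w^r>\tfrac{1}{2C_\varepsilon}$, which is the required inequality.

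The main obstacle I expect is exactly the control of the two error pieces: there is no pointwise domination $\Phi(|f|w)\le C_\varepsilon\Psi_\varepsilon(|f|)w^r$ (as $w\to\infty$ with $\Phi(|f|)$ bounded, the ratio behaves like $(\log w)^\delta$), so one must genuinely combine the smallness of the set $\{w>A\}$, which comes from $\frac{1}{|Q|}\int_Q w^r<1$, with the reverse H\"older improvement up to the exponent $r+\varepsilon$. The bookkeeping needed to keep every constant ($M$, $A_\varepsilon$ and the final $C_\varepsilon$) dependent only on $\varepsilon$ and the fixed data, hence independent of $N$, $k$ and $Q$, is precisely what replaces the flawed limiting step in \cite{B22Pot}. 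The statement of Claim~\ref{af: claim 1} for $\ell\ge0$ can be handled similarly, the extra factor $\ell^{\delta/\varepsilon}a^{\ell\varepsilon}$ there compensating for the larger size of $\frac{1}{|Q|}\int_Q v^r$.
\end{proof}
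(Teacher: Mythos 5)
Your proof is correct, but it follows a genuinely different path from the one in the paper. The paper's argument mirrors the proof of Claim~\ref{af: claim 1}: it splits the Luxemburg condition $\|g/a^k\|_{\Phi,Q}>1$ into the pieces $I$ and $II$ supported on $\{v\le t^*a^k\}$ and its complement, passes (when $II>1/2$) to the integral $\frac{1}{|Q|}\int_Q\Phi(|f|)v^r w_k$ with $w_k=(\log(v/a^k))^{\delta}\mathcal X_B$, and then invokes a generalized H\"older inequality in the Orlicz pair $\bigl(\eta_\varepsilon,\tilde\eta_\varepsilon\bigr)$ with respect to the weighted measure $v^r\,dx$. The factor $\|w_k\|_{\tilde\eta_\varepsilon,Q,v^r}$ is controlled by the reverse H\"older inequality and $\frac{1}{|Q|}\int_Q v^r<a^{kr}$, while $\|\Phi(|f|)\|_{\eta_\varepsilon,Q,v^r}$ is handled through the infimum representation \eqref{eq: relacion norma con infimo} with a judicious choice of $\tau$ that lets one absorb a term of size $a^{kr}/2$ into the left-hand side. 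Your argument avoids the Orlicz-norm machinery entirely: after normalizing $w=v/a^k$ you estimate $\int_Q\Phi(|f|w)$ directly, carving $Q$ into five regions according to the sizes of $w$ and $\Phi(|f|)$, controlling three of them pointwise by $C_\varepsilon\Psi_\varepsilon(|f|)w^r$ via submultiplicativity, lower type $r$, and the growth bound $\Phi(t)\lesssim t^r(1+\log^+t)^\delta$, and showing the remaining two ``error'' regions contribute less than $\tfrac12|Q|$ by combining $\frac{1}{|Q|}\int_Q w^{r+\varepsilon}\lesssim 1$ with the smallness of $\{w>A\}$. The trade-off is clear: the paper's H\"older route is more compact and slots the same computation uniformly into both Claim~\ref{af: claim 1} and Claim~\ref{af: Claim 3} (the $\ell\ge 0$ case just produces an extra factor $a^{(\ell+1)\varepsilon}$ and a correspondingly shifted $\tau$), while your case analysis is more elementary and makes it completely transparent why the absorption by $\tfrac12 |Q|$ works and why the exponent $\delta/\varepsilon$ on the logarithm, rather than $\delta$, is what the pointwise bound on piece (iv) genuinely requires. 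Both approaches correctly replace the flawed limiting step and lead to the same constant structure.
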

	
	By using this estimate we can proceed as follows
	\begin{align*}
	\sum_{k\geq N}\sum_{i:Q_{j,i}^k\in \Gamma_{-1,k}} a^{(k+1)r}u(Q_{j,i}^k)&\leq C\sum_{Q_{s.l}^t \in P^{-1}}a^{tr}u(Q_{s,l}^t)\\
	&\leq C_\varepsilon\sum_{Q_{s.l}^t \in P^{-1}}\frac{u(Q_{s,l}^t)}{|Q_s^t|}\int_{Q_s^t}\Psi_\varepsilon\left(|f(x)|\right)v^r(x)\,dx\\
	&\leq C_\varepsilon\int_{\mathbb{R}^n} \Psi_\varepsilon\left(|f(x)|\right)v^r(x)\left[\sum_{Q_{s.l}^t \in P^{-1}}\frac{u(Q_{s,l}^t)}{|Q_s^t|}\mathcal{X}_{Q_s^t}(x)\right]\,dx\\
	&=C_\varepsilon\int_{\mathbb{R}^n} \Psi_\varepsilon\left(|f(x)|\right)v^r(x)h_2(x)\,dx\\
	&\leq C_\varepsilon\int_{\mathbb{R}^n} \Psi_\varepsilon\left(|f(x)|\right)u(x)v^r(x)\,dx,
	\end{align*}
	by virtue of Claim~4 in \cite{B22Pot}. This concludes the proof.
\end{proof}

We proceed with the proofs of the claims, in order to complete the argument above.

\begin{proof}[Proof of Claim~\ref{af: claim 1}]
	Fix $\ell\geq 0$ and a cube $Q_j^k\in \bigcup_{k\geq N} \Gamma_{\ell,k}$. We know that $\|g\|_{\Phi,Q_j^k}>a^k$ or,  equivalently, $\left\|\frac{g}{a^k}\right\|_{\Phi, Q_j^k}>1$. Denote with $A=\{x\in Q_j^k: v(x)\leq t^*a^k\}$ and $B=Q_j^k\backslash A$, where $t^*$ is the number verifying that if $z\geq t^*$, then
\[\frac{\Phi(z)}{z^r}\leq C_0 \left(\log z\right)^\delta.\]
Then,
\[1<\left\|\frac{g}{a^k}\right\|_{\Phi, Q_j^k}\leq \left\|\frac{g}{a^k}\mathcal{X}_A\right\|_{\Phi, Q_j^k}+\left\|\frac{g}{a^k}\mathcal{X}_B\right\|_{\Phi, Q_j^k}=I+II.\]
This inequality implies that either $I>1/2$ or $II>1/2$. Since $\Phi\in\mathfrak{F}_{r,\delta}$ we can easily see that $I>1/2$ implies that
\[a^{kr}<\frac{2^rC_0(\log (2t^*))^{\delta}}{|Q_j^k|}\int_{Q_j^k}\Phi\left(|f|\right)v^r\leq \frac{2^rC_0(\log (2t^*))^{\delta}}{|Q_j^k|}\int_{Q_j^k}(\eta_\varepsilon \circ \Phi)\left(|f|\right)v^r,\]
because $\eta_\varepsilon(z)\geq z$.

On the other hand, if $II>1/2$ then again 
\begin{align*}
1&<\frac{1}{|Q_j^k|}\int_B\Phi\left(\frac{2|f|v}{a^k}\right)\\
&\leq \frac{\Phi(2)C_0}{|Q_j^k|}\int_B \Phi\left(|f|\right)\frac{v^r}{a^{kr}}\left(\log\left(\frac{v}{a^k}\right)\right)^{\delta},
\end{align*}
since $\Phi\in \mathfrak{F}_{r,\delta}$. This implies that
\[a^{kr}\leq \frac{\Phi(2)C_0}{|Q_j^k|}\int_{Q_j^k}\Phi\left(|f|\right)v^rw_k,\]
where $w_k(x)=\left(\log\left(\frac{v(x)}{a^k}\right)\right)^{\delta}\mathcal{X}_{B}(x)$. We shall now perform a generalized Hölder inequality with the Young functions
\[\eta_\varepsilon(z)=z(1+\log^+z)^{\delta/\varepsilon}\quad \text{ and }\quad \tilde\eta_\varepsilon(z) \approx (e^{z^{\varepsilon/\delta}}-e)\mathcal{X}_{(1,\infty)}(z),\]
 with respect to the measure $d\mu(x)=v^r(x)\,dx$. Thus we have
\begin{equation}\label{eq: af: Claim 1 - eq1}
\frac{1}{|Q_j^k|}\int_{Q_j^k}\Phi\left(|f|\right)w_kv^r\leq \frac{v^r(Q_j^k)}{|Q_j^k|}\|\Phi(|f|)\|_{\eta_\varepsilon,Q_j^k,v^r}\|w_k\|_{\tilde\eta_\varepsilon,Q_j^k,v^r}.    
\end{equation}
Let us first estimate the last factor.  Since $e^{(\log z)^\varepsilon}\leq z^\varepsilon$ when $z\geq e^{\varepsilon^{1/(\varepsilon-1)}}$, we proceed as follows 
\begin{equation}\label{eq: af: Claim 1 - eq2}
    \frac{1}{v^r(Q_j^k)}\int_{Q_j^k}\tilde\eta_\varepsilon(w_k)v^r\leq \tilde\eta_\varepsilon\left(\varepsilon^{\delta/(\varepsilon-1)}\right)+\frac{1}{v^r(Q_j^k)}\int_{Q_j^k\cap\left\{v/a^k>e^{\varepsilon^{1/(\varepsilon-1)}}\right\}}\frac{v^{r+\varepsilon}}{a^{k\varepsilon}}.
\end{equation}
Since $\varepsilon^{-\varepsilon}\leq e$ we have that
\[\tilde\eta_\varepsilon\left(\varepsilon^{\delta/(\varepsilon-1)}\right)\leq e^{e^{1/(1-\varepsilon)}}.\]
On the other hand, our hypothesis on $v$ implies that $v^r\in \text{RH}_s$, where $s=1+\varepsilon/r$. Since $v^{r}\in \Lambda_{\ell,k}$ we obtain
\begin{align*}
\frac{1}{v^r(Q_j^k)}\int_{Q_j^k\cap\left\{v/a^k>e^{\varepsilon^{1/(\varepsilon-1)}}\right\}}\frac{v^{r+\varepsilon}}{a^{k\varepsilon}}&\leq  \frac{a^{-k\varepsilon}}{v^r(Q_j^k)}\int_{Q_j^k}v^{r+\varepsilon}\\
&\leq [v^r]_{\text{RH}_{s}}^{s}\frac{a^{-k\varepsilon}|Q_j^k|}{v^r(Q_j^k)}\left(\frac{1}{|Q_j^k|}\int_{Q_j^k}v^r\right)^{s}\\
&= [v^r]_{\text{RH}_{s}}^{s} a^{-k\varepsilon}\left(\frac{1}{|Q_j^k|}\int_{Q_j^k}v^r\right)^{s-1}\\
&\leq [v^r]_{\text{RH}_{s}}^{s} a^{-k\varepsilon}a^{(k+\ell+1)\varepsilon}\\
&=[v^r]_{\text{RH}_{s}}^{s}a^{(\ell+1)\varepsilon}.   
\end{align*}
By using these two estimates in \eqref{eq: af: Claim 1 - eq2}, we get
\[\|w_k\|_{\tilde\eta_\varepsilon,Q_j^k,v^r}\leq e^{e^{1/(1-\varepsilon)}}+ [v^r]_{\text{RH}_{s}}^{s}a^{(\ell+1)\varepsilon}\leq (e^{e^{1/(1-\varepsilon)}}+[v^r]_{\text{RH}_{s}}^{s})a^{(\ell+1)\varepsilon}.\]
% \begin{align*}
%     \frac{1}{v^r(Q_j^k)}\int_{Q_j^k}\tilde\eta_\varepsilon(w_k)v^r&\leq \tilde\eta_\varepsilon\left(\varepsilon^{-\delta/(1-\varepsilon)}\right)+\frac{1}{v^r(Q_j^k)}\int_{Q_j^k\cap\left\{v/a^k>e^{\varepsilon^{-1/(1-\varepsilon)}}\right\}}\frac{v^{r+\varepsilon}}{a^{k\varepsilon}}\\
%     &\leq e^{e^{1/(1-\varepsilon)}}+\frac{a^{-k\varepsilon}}{v^r(Q_j^k)}\int_{Q_j^k}v^{r+\varepsilon}\\
%     &\leq e^{e^{1/(1-\varepsilon)}}+ [v^r]_{\text{RH}_{s}}^{s}\frac{a^{-k\varepsilon}|Q_j^k|}{v^r(Q_j^k)}\left(\frac{1}{|Q_j^k|}\int_{Q_j^k}v^r\right)^{s}
% \end{align*}
% \[\tilde \eta_\varepsilon (w_k)\leq e^{\left(\log\left(\frac{v}{a^k}\right)\right)^{\varepsilon}}\]
 
% \begin{align*}
% \frac{1}{v^r(Q_j^k)}\int_{Q_j^k}\tilde\eta_\varepsilon(w_k)v^r&\leq \frac{a^{-k\varepsilon}}{v^r(Q_j^k)}\int_{Q_j^k}v^{r+\varepsilon}\\
% &\leq [v^r]_{\text{RH}_{s}}^{s}\frac{a^{-k\varepsilon}|Q_j^k|}{v^r(Q_j^k)}\left(\frac{1}{|Q_j^k|}\int_{Q_j^k}v^r\right)^{s}\\
% &= [v^r]_{\text{RH}_{s}}^{s} a^{-k\varepsilon}\left(\frac{1}{|Q_j^k|}\int_{Q_j^k}v^r\right)^{s-1}\\
% &\leq [v^r]_{\text{RH}_{s}}^{s} a^{-k\varepsilon}a^{(k+\ell+1)\varepsilon}\\
% &=[v^r]_{\text{RH}_{s}}^{s}a^{(\ell+1)\varepsilon}.
% \end{align*}
% This estimate implies that $\|w_k\|_{\tilde\eta_\varepsilon,Q_j^k,v^r}\leq [v^r]_{\text{RH}_{s}}^{s}a^{(\ell+1)\varepsilon}$. 
We also observe that
\begin{equation}\label{eq: relacion norma con infimo}
  \|\Phi(|f|)\|_{\eta_\varepsilon,Q_j^k,v^r}\approx \inf_{\tau>0}\left\{\tau+\frac{\tau}{v^r(Q_j^k)}\int_{Q_j^k}\eta_\varepsilon\left(\frac{\Phi(|f|)}{\tau}\right)v^r\right\}.  
\end{equation}
If we choose $\tau=(2a^{(\ell+1)(r+\varepsilon)}(e^{e^{1/(1-\varepsilon)}}+[v^r]_{\text{RH}_{s}}^{s}))^{-1}$ then we can estimate the right-hand side of \eqref{eq: af: Claim 1 - eq1} as follows
\begin{align*}
    \frac{v^r(Q_j^k)}{|Q_j^k|}\|\Phi(|f|)\|_{\eta_\varepsilon,Q_j^k,v^r}\|w_k\|_{\tilde\eta_\varepsilon,Q_j^k,v^r}&\leq \frac{a^{kr}}{2}+(e^{e^{1/(1-\varepsilon)}}+[v^r]_{\text{RH}_{s}}^{s})a^{(\ell+1)\varepsilon}\tau\eta_\varepsilon\left(\frac{1}{\tau}\right)\frac{1}{|Q_j^k|}\int_{Q_j^k}\Psi_\varepsilon(|f|)v^r.
\end{align*}
Notice that
\[\tau\eta_\varepsilon\left(\frac{1}{\tau}\right)=\left(1+\log\left(\frac{1}{\tau}\right)\right)^{\delta/\varepsilon}\leq 2^{\delta/\varepsilon}\left(\log(2(e^{e^{1/(1-\varepsilon)}}+[v^r]_{\text{RH}_{s}}^{s}))+(\ell+1)(r+\varepsilon)\log a\right)^{\delta/\varepsilon}\leq C_\varepsilon \ell^{\delta/\varepsilon}.\]
% \frac{a^{(k+\ell+1)r}}{2a^{(\ell+1)(r+\varepsilon)}(e^{e^{1/(1-\varepsilon)}}+[v^r]_{\text{RH}_{s}}^{s})}(e^{e^{1/(1-\varepsilon)}}+[v^r]_{\text{RH}_{s}}^{s})a^{(\ell+1)\varepsilon} 
By plugging these two estimates in \eqref{eq: af: Claim 1 - eq1} we arrive to
\begin{align*}
    a^{kr}&\leq \frac{C_\varepsilon \ell^{\delta/\varepsilon}a^{\ell\varepsilon}}{|Q_j^k|}\int_{Q_j^k}\Psi_\varepsilon(|f|)v^r,
\end{align*}
and we are done.
	\end{proof}
	
	\medskip
	
\begin{proof}[Proof of Claim~\ref{af: Claim 3}]
	The proof follows similar arguments as the previous one. By adopting the same notation, we have that $\left\|\frac{g}{a^k}\right\|_{\Phi, Q_j^k}>1$, and this implies that either $I>1/2$ or $II>1/2$. If $I>1/2$, we obtain 
	\[a^{kr}<\frac{C_0(\log (2t^*))^{\delta}}{|Q_j^k|}\int_{Q_j^k}\Phi\left(|f|\right)v^r\leq \frac{C_0(\log (2t^*))^{\delta}}{|Q_j^k|}\int_{Q_j^k}\Psi_\varepsilon\left(|f|\right)v^r.\]
	We now assume that $II>1/2$. By performing the same Hölder inequality as in Claim~\ref{af: claim 1}, we get
	\begin{equation}\label{eq: af: Claim 3 - eq1}
    \frac{1}{|Q_j^k|}\int_{Q_j^k}\Phi\left(|f|\right)w_kv^r\leq \frac{v^r(Q_j^k)}{|Q_j^k|}\|\Phi(|f|)\|_{\eta_\varepsilon,Q_j^k,v^r}\|w_k\|_{\tilde\eta_\varepsilon,Q_j^k,v^r}.    
    \end{equation}
    In order to estimate the factor $\|w_k\|_{\tilde\eta_\varepsilon,Q_j^k,v^r}$ we proceed as before. Since
    \begin{align*}
\frac{1}{v^r(Q_j^k)}\int_{Q_j^k\cap\left\{v/a^k>e^{\varepsilon^{-1/(1-\varepsilon)}}\right\}}\frac{v^{r+\varepsilon}}{a^{k\varepsilon}}&\leq  \frac{a^{-k\varepsilon}}{v^r(Q_j^k)}\int_{Q_j^k}v^{r+\varepsilon}\\
&\leq [v^r]_{\text{RH}_{s}}^{s}\frac{a^{-k\varepsilon}|Q_j^k|}{v^r(Q_j^k)}\left(\frac{1}{|Q_j^k|}\int_{Q_j^k}v^r\right)^{s}\\
&= [v^r]_{\text{RH}_{s}}^{s} a^{-k\varepsilon}\left(\frac{1}{|Q_j^k|}\int_{Q_j^k}v^r\right)^{s-1}\\
&\leq [v^r]_{\text{RH}_{s}}^{s} a^{-k\varepsilon}a^{k\varepsilon}\\
&=[v^r]_{\text{RH}_{s}}^{s}   
\end{align*}
we obtain that
\[\|w_k\|_{\tilde\eta_\varepsilon,Q_j^k,v^r}\leq e^{e^{1/(1-\varepsilon)}}+[v^r]_{\text{RH}_{s}}^{s}=C_\varepsilon.\]
By using \eqref{eq: relacion norma con infimo} and choosing $\tau=(2C_\varepsilon)^{-1}$, we can estimate the right-hand side in \eqref{eq: af: Claim 3 - eq1} as follows
\begin{align*}
    \frac{v^r(Q_j^k)}{|Q_j^k|}\|\Phi(|f|)\|_{\eta_\varepsilon,Q_j^k,v^r}\|w_k\|_{\tilde\eta_\varepsilon,Q_j^k,v^r}&\leq \frac{a^{kr}}{2}+\tau\eta_\varepsilon\left(\frac{1}{\tau}\right)\frac{1}{|Q_j^k|}\int_{Q_j^k}\Psi_\varepsilon\left(|f|\right)v^r\\
    &\leq \frac{a^{kr}}{2} + (1+\log(2C_\varepsilon))^{\delta/\varepsilon}\frac{1}{|Q_j^k|}\int_{Q_j^k}\Psi_\varepsilon\left(|f|\right)v^r.
\end{align*}
This yields
\[a^{kr}\leq \frac{C_\varepsilon}{|Q_j^k|}\int_{Q_j^k}\Psi_\varepsilon\left(|f|\right)v^r.\]
This concludes the proof.
	\end{proof}

\section{Applications: Mixed estimates for the generalized fractional integral operator}

Mixed inequalities for the generalized fractional maximal operator $M_{\gamma,\Phi}$ were also given in \cite{B22Pot}. One of the key properties in order to establish the following result was to define an auxiliary operator that is bounded in $L^\infty(uv^r)$ when $v^r\in A_\infty$. This operator is given by 
\[\mathcal{T}_\Phi f(x)=\frac{M_\Phi(fv)(x)}{M_\Phi v(x)}.\]
It is not difficult to see that $M_\Phi v\approx v$ when $\Phi\in\mathfrak{F}_{r,\delta}$ and $v^r$ is an $A_1$-weight, so this operator is an extension of the Sawyer operator $S_\Phi f = M_\Phi(fv)/v$ considered in the main theorem. 

	\begin{coro}[Corrected version of Corollary 3 in \cite{B22Pot}]\label{coro: corolario del teorema principal - 2}
		Let $r\geq 1$, $\delta\geq 0$ and $\Phi\in \mathfrak{F}_{r,\delta}$. Let $u\in A_1$, $v^r\in A_\infty$ and $\Psi$ be a Young function that verifies $\Psi(t) \approx \Phi(t)$, for every $t\geq t_0\geq 0$. Then, there exists $\varepsilon_0>0$ such that the inequality
		\[uv^r\left(\left\{x\in \mathbb{R}^n: \frac{M_\Psi(fv)(x)}{M_\Psi v(x)}>t
		\right\}\right)\leq C_1\int_{\mathbb{R}^n}(\eta_\varepsilon\circ \Psi)\left(\frac{C_2|f(x)|}{t}\right)u(x)v^r(x)\,dx\]
		 holds for every $t>0$ and every $0<\varepsilon<\varepsilon_0$, where $C_1$ depends on $\varepsilon$ and $\eta_\varepsilon(z)=z(1+\log^+z)^{\delta/\varepsilon}$.   
	\end{coro}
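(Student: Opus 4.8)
The plan is to derive the estimate from Corollary~\ref{coro: corolario del teorema principal - 1}, applied to the genuine $\mathfrak{F}_{r,\delta}$ function $\Phi$, and to transfer it to $\Psi$ using the comparability $\Psi\approx\Phi$ on $[t_0,\infty)$; say $\alpha\Phi(s)\le\Psi(s)\le\beta\Phi(s)$ for $s\ge t_0$ (if $t_0=0$ the argument below trivializes, so assume $t_0>0$). The first ingredient I need is the pointwise equivalence $M_\Psi h\approx M_\Phi h$, valid for every measurable $h$ with constants depending only on $\Phi$, $\Psi$ and $t_0$. For a fixed cube $Q$ and $\lambda>0$ one splits $\frac1{|Q|}\int_Q\Psi(|h|/\lambda)$ according to whether $|h|/\lambda\le t_0$ or $|h|/\lambda>t_0$: on the first region convexity gives $\Psi(|h|/\lambda)\le(\Psi(t_0)/t_0)|h|/\lambda$, and on the second $\Psi(|h|/\lambda)\le\beta\Phi(|h|/\lambda)$. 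Combining this with the Jensen bound $\frac1{|Q|}\int_Q|h|\le\Phi^{-1}(1)\|h\|_{\Phi,Q}$ and choosing $\lambda$ a suitable multiple of $\|h\|_{\Phi,Q}$ makes the resulting average at most $1$, so $\|h\|_{\Psi,Q}\le C_\flat\|h\|_{\Phi,Q}$; taking the supremum over $Q$ gives $M_\Psi h\le C_\flat M_\Phi h$, and the reverse bound $M_\Phi h\le C_\sharp M_\Psi h$ follows symmetrically.

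The second ingredient is a truncation of $f$ that removes the range in which $\Psi$ may vanish while $\Phi$ does not. Set $\lambda_0=t_0/(C_\flat C_\sharp+t_0)\in(0,1)$, $\sigma=\lambda_0 t$, and write $f=f_1+f_2$ with $f_1=f\mathcal{X}_{\{|f|>\sigma\}}$. Since $|f_2|\le\sigma$, for every cube $Q$ we have $\|f_2v\|_{\Psi,Q}\le\sigma\|v\|_{\Psi,Q}$, hence $M_\Psi(f_2v)\le\sigma M_\Psi v$; using subadditivity of the Luxemburg norm together with $M_\Psi(f_1v)\le C_\flat M_\Phi(f_1v)$ and $M_\Psi v\ge C_\sharp^{-1}M_\Phi v$ we obtain
\[
\frac{M_\Psi(fv)}{M_\Psi v}\le\frac{M_\Psi(f_1v)}{M_\Psi v}+\sigma\le C_\flat C_\sharp\,\frac{M_\Phi(f_1v)}{M_\Phi v}+\lambda_0 t ,
\]
and therefore
\[
\left\{x:\frac{M_\Psi(fv)(x)}{M_\Psi v(x)}>t\right\}\subseteq\left\{x:\frac{M_\Phi(f_1v)(x)}{M_\Phi v(x)}>\frac{(1-\lambda_0)t}{C_\flat C_\sharp}\right\}.
\]

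Applying Corollary~\ref{coro: corolario del teorema principal - 1} to $\Phi$, $f_1$ and the level $(1-\lambda_0)t/(C_\flat C_\sharp)$, and writing $C_2=C_\flat C_\sharp/(1-\lambda_0)=C_\flat C_\sharp+t_0$, the $uv^r$-measure of the set on the right is at most $C\int_{\mathbb{R}^n}(\eta_\varepsilon\circ\Phi)(C_2|f_1|/t)\,uv^r$. To finish I replace $\eta_\varepsilon\circ\Phi$ by $\eta_\varepsilon\circ\Psi$ in the integrand: on $\{|f|\le\sigma\}$ one has $f_1=0$ and both composed functions vanish, while on $\{|f|>\sigma\}$ one has $C_2|f_1|/t=C_2|f|/t>C_2\lambda_0=t_0$, so $\Phi(C_2|f_1|/t)\le\alpha^{-1}\Psi(C_2|f_1|/t)$; since $\eta_\varepsilon$ is increasing and satisfies $\eta_\varepsilon(cz)\le c(1+\log^+c)^{\delta/\varepsilon}\eta_\varepsilon(z)$ for $c\ge1$, this yields $(\eta_\varepsilon\circ\Phi)(C_2|f_1|/t)\le C'(\eta_\varepsilon\circ\Psi)(C_2|f_1|/t)\le C'(\eta_\varepsilon\circ\Psi)(C_2|f|/t)$ on all of $\mathbb{R}^n$. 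This gives the claimed inequality with $C_1=CC'$ and the same $\varepsilon_0$ provided by Corollary~\ref{coro: corolario del teorema principal - 1}.

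The delicate point is exactly this last replacement. Without the truncation at height $\sigma$, points where $|f|$ is small but nonzero would contribute a positive amount $(\eta_\varepsilon\circ\Phi)(C_2|f|/t)$ to the left-hand integral while $(\eta_\varepsilon\circ\Psi)(C_2|f|/t)$ could vanish there, so no global pointwise comparison between the two integrands is available; choosing $\sigma$ so that $C_2\sigma/t=t_0$ precisely is what makes the estimate close. (One could alternatively reduce to Theorem~\ref{teo: teorema principal} using $M_\Psi v\gtrsim v$, but keeping $M_\Psi v$ in the denominator is what lets the truncation error $\sigma M_\Psi v$ be absorbed cleanly.)
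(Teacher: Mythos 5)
Your proof is correct and follows essentially the same route as the paper: the paper passes from $\Psi$ to $\Phi$ via the Luxemburg-norm equivalence (Proposition~8 in \cite{B22Pot}), applies Corollary~\ref{coro: corolario del teorema principal - 1}, and then uses the $L^\infty$-boundedness of $\mathcal{T}_\Psi$ together with the abstract truncation principle of Lemma~13 in \cite{B22Pot} to restrict the integral to $\{|f|>t/2\}$, where $\Phi$ and $\Psi$ (and hence $\eta_\varepsilon\circ\Phi$ and $\eta_\varepsilon\circ\Psi$) are comparable. You re-derive both of those cited ingredients inline --- proving the pointwise equivalence $M_\Psi\approx M_\Phi$ from the comparability of $\Psi$ and $\Phi$ on $[t_0,\infty)$, and carrying out the truncation $f=f_1+f_2$ by hand with a threshold $\sigma$ tuned so that $C_2\sigma/t=t_0$ --- which makes the argument self-contained but does not change the underlying strategy, and yields a slightly different but equally valid constant $C_2$.
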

	
	\begin{proof}
	     By combining the equivalence between $\Phi$ and $\Psi$ and Proposition~8 in \cite{B22Pot}, we obtain that there exist positive constants $A$ and $B$ such that
	\[A\|\cdot\|_{\Phi,Q}\leq \|\cdot\|_{\Psi,Q}\leq B\|\cdot\|_{\Phi,Q},\]
	for every cube $Q$. By setting $c_1=B/A$, we have that
	\[\frac{M_{\Psi}(fv)(x)}{M_\Psi v(x)} \leq c_1\frac{M_{\Phi}(fv)(x)}{M_\Phi v(x)}\]
	for almost every $x$.
	By applying Corollary~\ref{coro: corolario del teorema principal - 1}, there exists $\varepsilon_0>0$ such that for every $0<\varepsilon<\varepsilon_0$ we have
	\begin{align*}
	uv^r\left(\left\{x\in \mathbb{R}^n: \frac{M_\Psi(fv)(x)}{M_\Psi v(x)}>t
	\right\}\right)&\leq uv^r\left(\left\{x\in \mathbb{R}^n: \frac{M_\Phi(fv)(x)}{M_\Phi v(x)}>\frac{t}{c_1}
	\right\}\right)\\
	&\leq C \int_{\mathbb{R}^n}(\eta_\varepsilon\circ \Phi)\left(\frac{c_1|f|}{t}\right)uv^r.
	\end{align*}
	Observe that
	\[\|\mathcal{T}_\Psi f\|_{L^{\infty}}=\left\|\frac{M_\Psi (fv)}{M_\Psi v}\right\|_{L^{\infty}}\leq  \|f\|_{L^{\infty}},\]
	which directly implies $\|\mathcal{T}_\Psi f\|_{L^{\infty}(uv^r)}\leq \|f\|_{L^{\infty}(uv^r)}$ since the measure given by $d\mu(x)=u(x)v^r(x)\,dx$ is absolutely continuous with respect to the Lebesgue measure. We now apply Lemma~13 in \cite{B22Pot} to obtain
	\begin{align*}
	uv^r\left(\left\{x\in \mathbb{R}^n: \frac{M_\Psi(fv)(x)}{M_\Psi v(x)}>t
	\right\}\right)&\leq C\int_{\{x: |f(x)|>t/2\}}(\eta_\varepsilon\circ \Phi)\left(\frac{2c_1|f(x)|}{t}\right)u(x)v^r(x)\,dx\\
	&\leq C(\eta_\varepsilon\circ \Phi)\left(\frac{c_1}{t_0}\right)\int_{\{x: |f(x)|>t/2\}}(\eta_\varepsilon\circ \Phi)\left(\frac{2t_0|f(x)|}{t}\right)u(x)v^r(x)\,dx\\
	&\leq C_1\int_{\{x: |f(x)|>t/2\}}(\eta_\varepsilon\circ \Psi)\left(\frac{2t_0|f(x)|}{t}\right)u(x)v^r(x)\,dx\\
	&\leq C_1\int_{\mathbb{R}^n}(\eta_\varepsilon\circ \Psi)\left(\frac{C_2|f(x)|}{t}\right)u(x)v^r(x)\,dx.\qedhere
	\end{align*}
	\end{proof}
	
	The corollary above is key in order to obtain mixed inequalities for the generalized fractional maximal operator defined, for $0<\gamma<n$ and a Young function $\Phi$, by the expression
	\[M_{\gamma,\Phi}f(x)=\sup_{Q\ni x}|Q|^{\gamma/n}\|f\|_{\Phi,Q}.\]

    Mixed estimates for this operator are contained in the following theorems.
    
    \begin{teo}	\label{teo: mixta para Mgamma,Phi, caso r<p<n/gamma}
	Let $\Phi(z)=z^r(1+\log^+ z)^\delta$, with $r\geq 1$ and $\delta \geq 0$. Let $0<\gamma<n/r$, $r<p<n/\gamma$ and $1/q=1/p-\gamma/n$. If $u\in A_1$ and $v^{q(1/p+1/r')}\in A_\infty$, then the inequality
	\[uv^{q(1/p+1/r')}\left(\left\{x\in \mathbb{R}^n: \frac{M_{\gamma,\Phi}(fv)(x)}{M_{\varphi} v(x)}>t\right\}\right)^{1/q}\leq C\left[ \int_{\mathbb{R}^n}\left(\frac{|f(x)|}{t}\right)^pu^{p/q}(x)(v(x))^{1+p/r'}\,dx\right]^{1/p},\]
	holds for every positive $t$, where $\varphi(z)=z^{q/p+q/r'}(1+\log^+ z)^{n\delta/(n-r\gamma)}$.
\end{teo}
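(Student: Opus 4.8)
\emph{Plan of the proof.} By homogeneity we may take $t=1$, and by the usual truncation argument it suffices to prove the stated weak-type inequality for $f\ge 0$ bounded with compact support. Put $s=q\big(1/p+1/r'\big)$ and $\delta'=n\delta/(n-r\gamma)$; since $q>p>r\ge 1$ we have $s>1$, and one checks at once that $\varphi(z)=z^{s}(1+\log^{+}z)^{\delta'}$ belongs to $\mathfrak{F}_{s,\delta'}$. The hypothesis $v^{q(1/p+1/r')}\in A_\infty$ is exactly $v^{s}\in A_\infty$, so together with $u\in A_1$ the Young function $\varphi$ and the weight $v$ satisfy the hypotheses of Corollary~\ref{coro: corolario del teorema principal - 2}. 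We shall also use the elementary identity $(uv^{s})^{1/q}=\big(u^{p/q}v^{\,1+p/r'}\big)^{1/p}$, which turns the target into a genuine $(p,q)$-type bound for the measure $uv^{s}\,dx$ on the left against $\omega\,dx$, where $\omega:=u^{p/q}v^{\,1+p/r'}$, on the right. The strategy is to reduce, by a Hedberg-type pointwise estimate, the fractional operator $M_{\gamma,\Phi}$ to the non-fractional mixed operator $\mathcal{T}_\varphi h:=M_\varphi(hv)/M_\varphi v$ that is governed by Corollary~\ref{coro: corolario del teorema principal - 2}.

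The core is a pointwise inequality of the form
\[M_{\gamma,\Phi}(fv)(x)\le C\,\big(M_\varphi v(x)\big)^{1-p/q}\,\big(M_\varphi(Gv)(x)\big)^{p/q},\]
where $G$ is an explicit power-type combination of $|f|$, $u$, $v$ and the number $\|f\|_{L^p(\omega)}$. To obtain it, fix $x$ and a cube $Q\ni x$ and split the supremum defining $M_{\gamma,\Phi}(fv)(x)$ according to whether $|Q|^{1/n}\le R$ or $|Q|^{1/n}>R$, with $R>0$ to be optimized. On the small cubes one writes $|Q|^{\gamma/n}\|fv\|_{\Phi,Q}\le R^{\gamma}\|fv\|_{\Phi,Q}$ and applies the generalized Hölder inequality in Orlicz spaces, factoring $fv$ against a pair of complementary Young functions one of which is comparable to $\varphi$, so that the $v$-type factor is controlled by $M_\varphi v(x)$; on the large cubes one uses that $\Phi$ has lower type $r$ together with Hölder against $\omega$ to dominate $|Q|^{\gamma/n}\|fv\|_{\Phi,Q}$ by a negative power of $R$ times $\|f\|_{L^p(\omega)}$. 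Optimizing $R$ (which, since $1/q=1/p-\gamma/n$, turns out to be a suitable power of $\|f\|_{L^p(\omega)}/M_\varphi v(x)$) balances the two contributions. The decisive point is that carrying the $(1+\log^{+})$ factors through the Orlicz–Hölder step and through this radius optimization is precisely what transforms $z^{r}$ into $z^{s}$ with $s=q/p+q/r'$ and the exponent $\delta$ of $\Phi$ into $\delta'=n\delta/(n-r\gamma)$ in $\varphi$; the fact that the logarithmic ``budget'' can be placed entirely on the $v$-side (i.e.\ inside $M_\varphi v$) is what will allow a clean $L^p$ right-hand side.

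Granting this, divide by $M_\varphi v(x)$ to get $M_{\gamma,\Phi}(fv)(x)/M_\varphi v(x)\le C\,(\mathcal{T}_\varphi G(x))^{p/q}$, so that $\{x: M_{\gamma,\Phi}(fv)(x)/M_\varphi v(x)>1\}\subseteq\{x:\mathcal{T}_\varphi G(x)>c\}$ for some $c=c(C,p,q)>0$. Corollary~\ref{coro: corolario del teorema principal - 2}, applied to $\varphi\in\mathfrak{F}_{s,\delta'}$, $u\in A_1$, $v^{s}\in A_\infty$, then yields
\[uv^{s}\Big(\big\{x:\tfrac{M_{\gamma,\Phi}(fv)(x)}{M_\varphi v(x)}>1\big\}\Big)\le C\int_{\mathbb{R}^n}(\eta_\varepsilon\circ\varphi)\big(C'|G(x)|\big)\,u(x)v^{s}(x)\,dx\]
for every $0<\varepsilon<\varepsilon_0$. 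Substituting the explicit form of $G$, the measure $uv^{s}$ recombines with the weight factors inside $G$ to reproduce exactly $\omega=u^{p/q}v^{1+p/r'}$; since $\eta_\varepsilon\circ\varphi$ is of $L^{s}(\log L)^{\delta'+\delta/\varepsilon}$-type and $\varphi$ was chosen so that its logarithmic part already accounts for the loss, the right-hand side collapses (the logarithmic factor being absorbed, for $\varepsilon$ small, by the same mechanism as in the proof of Corollary~\ref{coro: corolario del teorema principal - 2}: first restricting to the set where the argument of $\varphi$ exceeds a fixed constant, via the $L^\infty(uv^{s})$-boundedness of $\mathcal{T}_\varphi$ and Lemma~13 in \cite{B22Pot}, and then using submultiplicativity) to a constant multiple of $\|f\|_{L^p(\omega)}^{\,q}$, the exponent $q$ being forced by the relation $1/q=1/p-\gamma/n$. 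Taking $q$-th roots and invoking $(uv^{s})^{1/q}=(u^{p/q}v^{1+p/r'})^{1/p}$ finishes the proof.

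The main obstacle is the simultaneous bookkeeping of the two one-parameter families that interact here. Analytically, the delicate step is keeping exact track of the $(1+\log^{+})$ factors through the generalized Hölder inequality and the radius optimization, so as to land precisely on the leading exponent $q/p+q/r'$ and the logarithmic exponent $n\delta/(n-r\gamma)$ defining $\varphi$, rather than on larger ones; this is where the choice of the complementary Young functions and of the threshold $R$ must be made with care, and it is also what makes a clean $L^p$ bound (with no Orlicz loss) possible. A second, more structural, difficulty is that the matching of weight powers forces $G$ to involve $u$ and $v$ through small, possibly negative, exponents, so one must never estimate $u^{-1}$ or $v^{-1}$ by a crude pointwise bound — which would be uncontrolled for $u\in A_1$ — but instead let the measure $uv^{s}$ of Corollary~\ref{coro: corolario del teorema principal - 2} recombine with $G$ to give exactly $\omega$. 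Finally, the absorption of the logarithmic term in the right-hand side of Corollary~\ref{coro: corolario del teorema principal - 2} is precisely the point where the corrected corollary — with its $\eta_\varepsilon$ machinery and Lemma~13 — is indispensable; the purely routine checks are that $\varphi\in\mathfrak{F}_{s,\delta'}$, $s>1$, and $v^{s}\in A_\infty$, all immediate from the hypotheses.
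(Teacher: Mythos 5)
Your overall plan is in the same spirit as the paper's (reduce $M_{\gamma,\Phi}$ to a Sawyer-type quotient by a Hedberg-style pointwise estimate, then apply the corrected corollary), but there is a genuine gap in the step where you apply Corollary~\ref{coro: corolario del teorema principal - 2}, and it is exactly the step that the paper's choice of auxiliary function is designed to circumvent.

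You propose to apply Corollary~\ref{coro: corolario del teorema principal - 2} directly to the Young function $\varphi(z)=z^{s}(1+\log^{+}z)^{\delta'}\in\mathfrak{F}_{s,\delta'}$ (with $s=q(1/p+1/r')$, $\delta'=n\delta/(n-r\gamma)$). That application is formally legitimate and produces a bound of the form $uv^{s}(\{\mathcal{T}_\varphi G>c\})\lesssim\int(\eta_\varepsilon\circ\varphi)(C'|G|)\,uv^{s}$ with $\eta_\varepsilon(z)=z(1+\log^{+}z)^{\delta'/\varepsilon}$. The difficulty is that for large arguments $(\eta_\varepsilon\circ\varphi)(z)\approx z^{s}(1+\log^{+}z)^{\delta'(1+s/\varepsilon)}$: the leading power is $s$ \emph{throughout} the range, with a multiplicative logarithmic loss that does not disappear. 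Consequently the right-hand side is an $L^{s}(\log L)^{\delta'(1+s/\varepsilon)}$ quantity in $G$, and your claim that it ``collapses to a constant multiple of $\|f\|_{L^p(\omega)}^q$'' is unsupported. The mechanism you invoke --- restricting to a level set via the $L^\infty(uv^{s})$-boundedness of $\mathcal{T}_\varphi$ and Lemma~13 --- is used in the proof of Corollary~\ref{coro: corolario del teorema principal - 2} only to change the argument of $\eta_\varepsilon\circ\Phi$ by a fixed multiplicative constant; it does not cancel the $(1+\log^{+})$ factor, and in general an $L^{s}(\log L)$ modular of $G$ cannot be bounded by a pure $L^{p}$ norm of $f$ without an Orlicz loss.

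The paper avoids precisely this by introducing the auxiliary function
\[
\xi(z)=\begin{cases} z^{q/\beta}, & 0\le z\le 1,\\[2pt] z^{\sigma}(1+\log^{+}z)^{\nu}, & z>1,\end{cases}
\qquad \sigma=\tfrac{nr}{n-r\gamma},\ \ \nu=\tfrac{n\delta}{n-r\gamma},\ \ \beta=\tfrac{q}{\sigma}\Bigl(\tfrac1p+\tfrac1{r'}\Bigr),
\]
so that $\xi\in\mathfrak{F}_{\sigma,\nu}$ and, crucially, $q/\beta>\sigma$. Because of this strict gap, $\eta_\varepsilon\circ\xi$ has upper type $q/\beta$: for $\lambda\le 1$ one has $(\eta_\varepsilon\circ\xi)(\lambda)=\lambda^{q/\beta}$ exactly, and for $\lambda>1$ the estimate $(\eta_\varepsilon\circ\xi)(\lambda)\lesssim\lambda^{\sigma}(1+\log\lambda)^{\nu(1+1/\varepsilon)}\lesssim\lambda^{q/\beta}$ holds, so the logarithms are entirely swallowed by the power gap $q/\beta-\sigma>0$. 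After that, $\lambda^{q/\beta}\,uv^{\sigma\beta}$ recombines --- with $w=u^{1/q}v^{1/p+1/r'-1}$ --- into $|f|^{p}u^{p/q}v^{1+p/r'}$ times a norm factor, and the clean $L^{p}$ bound drops out. The role of $\varphi$ in the paper is confined to the \emph{denominator}, via the comparison $(M_\xi v^{\beta})^{1/\beta}\lesssim M_\varphi v$; this is the precise sense in which the ``logarithmic budget'' is shifted onto the $v$-side, but the corollary itself is applied to $\xi$, not to $\varphi$. Your shortcut of replacing $\xi$ by $\varphi$ eliminates the very power gap that makes the log absorption possible, so the final $L^{p}$ estimate does not follow. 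You would need to reinstate an auxiliary function with distinct behavior on $[0,1]$ and on $(1,\infty)$ (or an equivalent device providing upper type strictly larger than its lower type) for the argument to close.

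A secondary issue: the Hedberg-type pointwise inequality $M_{\gamma,\Phi}(fv)\le C(M_\varphi v)^{1-p/q}(M_\varphi(Gv))^{p/q}$ that your argument rests on is asserted but not derived, and its exact form (in particular, the dependence of $G$ on $\|f\|_{L^p(\omega)}$, $u$ and $v$) is what determines whether the weight powers recombine into $u^{p/q}v^{1+p/r'}$. In the paper this step is outsourced to Proposition~10 and Lemma~9 of \cite{B22Pot}, which deliver the precise inequality $M_{\gamma,\Phi}(f_0/w)\le C[M_\xi(f_0^{p\beta/q}/w^{\beta})]^{1/\beta}(\int f_0^{p})^{\gamma/n}$; you should either invoke those results or supply a full derivation, since the exponents $p\beta/q$ and $\beta$ in the argument of $M_\xi$ are exactly what make the weight bookkeeping come out right.
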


\begin{proof}
We shall follow the scheme given in the proof of Theorem 4 in \cite{B22Pot}. The difference lies when we apply Corollary~\ref{coro: corolario del teorema principal - 2}, but the function controlling the right-hand side turns out to be auxiliary.
We define
	\[\sigma=\frac{nr}{n-r\gamma}, \quad \nu=\frac{n\delta}{n-r\gamma}, \quad \beta=\frac{q}{\sigma}\left(\frac{1}{p}+\frac{1}{r'}\right),\]
	and let $\xi$ be the auxiliary function given by
	\[\xi(z)=\left\{\begin{array}{ccr}
	z^{q/\beta},&\textrm{ if } & 0\leq z\leq 1,\\
	z^\sigma(1+\log^+z)^\nu, & \textrm{ if } &z> 1.
	\end{array}\right.\]
	Observe that
	\[\xi^{-1}(z)z^{\gamma/n}\approx \frac{z^{1/\sigma+\gamma/n}}{(1+\log^+z)^{\nu/\sigma}}=\frac{z^{1/r}}{(1+\log^+z)^{\delta/r}}\lesssim \Phi^{-1}(z),\]
	for every $z\geq 1$.
	Observe that $\beta>1$: indeed, since $p>r$ we have $q>\sigma$ and thus $q/(\sigma r')>1/r'$. On the other hand, $q/(p\sigma)>1/r$. By combining these two inequalities we have $\beta>1$. Applying Proposition 10 and Lemma 9 with $\beta$ from \cite{B22Pot},  we can conclude that
	\begin{equation}\label{eq: teo - mixta para M_{gamma,Phi}, caso r<p<n/gamma - eq1}
	M_{\gamma,\Phi}\left(\frac{f_0}{w}\right)(x)\leq C\left[M_\xi\left(\frac{f_0^{p\beta/q}}{w^{\beta}}\right)(x)\right]^{1/\beta}\left(\int_{\mathbb{R}^n}f_0^p(y)\,dy\right)^{\gamma/n}.
	\end{equation}
	
	Also observe that 
	\begin{equation}\label{eq: teo - mixta para M_{gamma,Phi}, caso r<p<n/gamma - eq2}
	\left(M_\xi v^\beta(x)\right)^{1/\beta}\lesssim M_{\varphi} v(x), \quad \textrm{ a.e. }x.
	\end{equation}
% 	Indeed, it is clear that $\xi(z^\beta)\lesssim \eta(z)$. Given $x$ and a fixed cube $Q$ containing it, we can write
% 	\begin{align*}
% 	\frac{1}{|Q|}\int_Q \xi\left(\frac{v^\beta}{\left\|v\right\|_{\eta,Q}^\beta}\right)&\lesssim\frac{1}{|Q|}\int_Q \eta\left(\frac{v}{\left\|v\right\|_{\eta,Q}}\right)\\
% 	&\leq 1,
% 	\end{align*}
% 	which directly implies the estimate.
	
	Notice that $\xi$ is equivalent to a Young function in $\mathfrak{F}_{\sigma,\nu}$ for $t\geq 1$. Since $q(1/p+1/r')=\beta\sigma$, if we set $f_0=|f|wv$, then we can use inequalities \eqref{eq: teo - mixta para M_{gamma,Phi}, caso r<p<n/gamma - eq1} and \eqref{eq: teo - mixta para M_{gamma,Phi}, caso r<p<n/gamma - eq2} to estimate
	\begin{align*}
	uv^{\tfrac{q}{p}+\tfrac{q}{r'}}\left(\left\{x: \frac{M_{\gamma,\Phi}(fv)(x)}{M_{\varphi} v(x)}>t\right\}\right)&\lesssim uv^{\beta\sigma}\left(\left\{x: \frac{M_{\gamma,\Phi}(fv)(x)}{\left(M_\xi v^\beta(x)\right)^{1/\beta}}>t\right\}\right)\\
	&\leq uv^{\beta\sigma}\left(\left\{x: \frac{M_\xi\left(f_0^{p\beta/q}w^{-\beta}\right)(x)}{M_\xi v^\beta(x)}>\frac{t^\beta}{\left(\int|f_0|^p\right)^{\beta\gamma/n}}\right\}\right).
	\end{align*}
	Since $v^{\beta\sigma}\in A_\infty$, by Corollary~\ref{coro: corolario del teorema principal - 2} there exists $\varepsilon_0>0$ such that the inequality
	\[uv^{\beta\sigma}\left(\left\{x: \frac{M_\xi\left(f_0^{p\beta/q}w^{-\beta}\right)(x)}{M_\xi v^\beta(x)}>t_0\right\}\right)\leq  C_\varepsilon\int_{\mathbb{R}^n}(\eta_{\varepsilon}\circ\xi)\left(c\frac{f_0^{p\beta/q}w^{-q}}{t_0}\right)uv^{\beta\sigma}\]
	holds for every $0<\varepsilon<\varepsilon_0$, with $t_0=t^\beta\|f_0\|_{p}^{-p\beta\gamma/n}$. Notice that $\eta_\varepsilon(z)=z(1+\log^+z)^{\nu/\varepsilon}$ in this case. Fixed $\varepsilon$, we write
	\[\int_{\mathbb{R}^n}(\eta_\varepsilon\circ\xi)\left(c\frac{|f|^{p\beta/q}(wv)^{\beta(p/q-1)}}{t^\beta}\left[\int_{\mathbb{R}^n}|f|^p(wv)^p\right]^{\gamma/n\beta}\right)uv^{\sigma\beta}
	=\int_{\mathbb{R}^n}(\eta_\varepsilon\circ\xi)(\lambda)uv^{\sigma\beta},\]
% 	\begin{align*}
% 	\int_{\mathbb{R}^n}(\eta_\varepsilon\circ\xi)\left(c\frac{|f|^{p\beta/q}(wv)^{\beta(p/q-1)}}{t^\beta}\left[\int_{\mathbb{R}^n}|f|^p(wv)^p\right]^{\gamma/n\beta}\right)uv^{\sigma\beta}
% 	&=\int_{\mathbb{R}^n}(\eta_\varepsilon\circ\xi)(\lambda)uv^{\sigma\beta}\\
% 	&=C_\varepsilon\left(\int_{A}(\eta_\varepsilon\circ\xi)(\lambda)uv^{\sigma\beta}+\int_{B}\xi(\lambda)uv^{\sigma\beta}\right),
% 	\end{align*}
	where
	\[\lambda=c\frac{|f|^{p\beta/q}(wv)^{\beta(p/q-1)}}{t^\beta}\left[\int_{\mathbb{R}^n}|f|^p(wv)^p\right]^{\gamma/n\beta}.\]
	We further split $\mathbb{R}^n$ into the sets
	$A=\{x\in \mathbb{R}^n: \lambda(x)\leq 1\}$ and $B=\mathbb{R}^n\backslash A$. Since $(\eta_{\varepsilon} \circ \xi)(z)=z^{q/\beta}$ for $0\leq z\leq 1$, we have that
	\[\int_{A}(\eta_\varepsilon\circ \xi)(\lambda(x))u(x)[v(x)]^{\sigma\beta}\,dx=\int_A [\lambda(x)]^{q/\beta}u(x)[v(x)]^{\sigma\beta}\,dx.\]
	If we set $w=u^{1/q}v^{1/p+1/r'-1}$, then 
	\begin{align*}
	\lambda^{q/\beta}uv^{\sigma\beta}&=c^{q/\beta}\frac{|f|^p}{t^q}(wv)^{p-q}\left[\int_{\mathbb{R}^n}|f|^p(wv)^p\right]^{q\gamma/n}uv^{\sigma\beta}\\
	&=c^{q/\beta}\frac{|f|^p}{t^q}\left[\int_{\mathbb{R}^n}|f|^p(wv)^p\right]^{q\gamma/n}u^{p/q}v^{\sigma\beta+(p-q)(1/p+1/r')}.
	\end{align*}
	Observe that
	\[\sigma\beta+(p-q)\left(\frac{1}{p}+\frac{1}{r'}\right)=q\left(\frac{1}{p}+\frac{1}{r'}\right)+(p-q)\left(\frac{1}{p}+\frac{1}{r'}\right)=1+\frac{p}{r'}.\]
	Also, notice that
	\[(wv)^p=u^{p/q}v^{1+p/r'-p+p}=u^{p/q}v^{1+p/r'}.\]
	Therefore,
	\begin{align*}
	\int_{A}(\eta_\varepsilon\circ\xi)(\lambda)uv^{\sigma\beta}&\leq \frac{c^{q/\beta}}{t^q}\left[\int_{\mathbb{R}^n}|f|^pu^{p/q}v^{1+p/r'}\right]^{q\gamma/n}\left[\int_{\mathbb{R}^n} |f|^pu^{p/q}v^{1+p/r'}\right]\\
	&= \frac{c^{q/\beta}}{t^q}\left[\int_{\mathbb{R}^n}|f|^pu^{p/q}v^{1+p/r'}\right]^{1+q\gamma/n}\\
	&=\frac{c^{q/\beta}}{t^q}\left[\int_{\mathbb{R}^n}|f|^pu^{p/q}v^{1+p/r'}\right]^{q/p}.
	\end{align*}
	On the other hand, $\lambda(x)>1$ over $B$ and 
	\[(\eta_\varepsilon\circ \xi)(z)\lesssim z^\sigma(1+\log z)^{\nu(1+1/\varepsilon)},\]
	and this function has an upper type $q/\beta$. Therefore we can estimate the integrand by $\lambda^{q/\beta}uv^{\sigma\beta}$ and proceed as we did with the set $A$. 
	Thus, we obtain 
	\[uv^{q(1/p+1/r')}\left(\left\{x\in \mathbb{R}^n: \frac{M_{\gamma,\Phi}(fv)(x)}{M_\varphi v(x)}>t\right\}\right)^{1/q}\leq C\left[ \int_{\mathbb{R}^n}\left(\frac{|f|}{t}\right)^pu^{p/q}v^{1+p/r'}\right]^{1/p}.\qedhere\]
\end{proof}

\begin{teo}[Corrected version of Theorem 5 in \cite{B22Pot}]\label{teo: mixta para Mgamma,Phi, caso p=r}
	Let $\Phi(z)=z^r(1+\log^+z)^\delta$, with $r\geq 1$ and $\delta\geq 0$. Let $0<\gamma<n/r$ and $1/q=1/r-\gamma/n$. If $u\in A_1$ and $v^q\in A_\infty$, then there exists a positive constant $\varepsilon_0$ such that the inequality
	\[uv^q\left(\left\{x\in \mathbb{R}^n: \frac{M_{\gamma,\Phi}(fv)(x)}{v(x)}>t\right\}\right)\leq C\, \varphi_\varepsilon\left(\int_{\mathbb{R}^n}\Phi_{\gamma,\varepsilon}\left(\frac{|f(x)|}{t}\right)\Psi\left(u^{1/q}(x)v(x)\right)\,dx\right),\]
	holds for $0<\varepsilon<\varepsilon_0$,
	where $\varphi_\varepsilon(z)=[z(1+\log^+z)^{\delta(1+1/\varepsilon)}]^{q/r}$, $\Psi_\varepsilon(z)=z^r(1+\log^+(z^{1-q/r}))^{q\delta(1+1/\varepsilon)/r}$, $\Phi_{\gamma,\varepsilon}(z)=\Phi(z)(1+\log^+z)^{\delta(1+1/\varepsilon) q\gamma/n+\delta/\varepsilon}$ and $C$ depends on $\varepsilon$. 
\end{teo}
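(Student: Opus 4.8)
The plan is to run the proof of Theorem~\ref{teo: mixta para Mgamma,Phi, caso r<p<n/gamma} at the endpoint exponent $p=r$, where the parameters degenerate. With $\sigma=nr/(n-r\gamma)$ and $\nu=n\delta/(n-r\gamma)$, the relation $1/q=1/r-\gamma/n$ forces $\sigma=q$, hence $\beta=(q/\sigma)(1/r+1/r')=1$ and $w=u^{1/q}v^{1/r+1/r'-1}=u^{1/q}$. The relevant auxiliary function is
\[
  \xi(z)=\begin{cases}z^{q},&0\le z\le1,\\ z^{q}(1+\log^+z)^{\nu},&z>1,\end{cases}
\]
which is equivalent to a Young function in $\mathfrak F_{q,\nu}$ and satisfies $\xi^{-1}(z)\,z^{\gamma/n}\lesssim\Phi^{-1}(z)$ for $z\ge1$. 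After reducing to $t=1$ by homogeneity of $M_{\gamma,\Phi}$ and assuming $g=|f|v$ bounded with compact support, Proposition~10 and Lemma~9 of \cite{B22Pot} (with $\beta=1$) give
\[
  M_{\gamma,\Phi}(fv)(x)\le C\,M_{\xi}\big(f_0^{\,r/q}w^{-1}\big)(x)\Big(\int_{\mathbb R^n}|f_0|^{r}\Big)^{\gamma/n},\qquad f_0=|f|wv;
\]
writing $h=f_0^{\,r/q}w^{-1}v^{-1}=|f|^{r/q}\big(u^{1/q}v\big)^{r/q-1}$ and $\Lambda=\big(\int_{\mathbb R^n}|f_0|^{r}\big)^{\gamma/n}$, this reads $M_{\gamma,\Phi}(fv)(x)/v(x)\le C\,\Lambda\,M_{\xi}(hv)(x)/v(x)$.

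The key step is then to apply the corrected main result, instead of the faulty estimate used in \cite{B22Pot}. Since $\xi\in\mathfrak F_{q,\nu}$, $u\in A_1$ and, by hypothesis, $v^{q}\in A_\infty$, Theorem~\ref{teo: teorema principal} applies to the Sawyer operator $M_{\xi}(hv)/v$; combining it with the inclusion $\{M_{\gamma,\Phi}(fv)/v>1\}\subseteq\{M_{\xi}(hv)/v>1/(C\Lambda)\}$ gives
\[
  uv^{q}\Big(\Big\{\tfrac{M_{\gamma,\Phi}(fv)}{v}>1\Big\}\Big)\le C_\varepsilon\int_{\mathbb R^n}(\eta_\varepsilon\circ\xi)\big(C\Lambda|h(x)|\big)\,u(x)v^{q}(x)\,dx
\]
for $0<\varepsilon<\varepsilon_0$, where $\eta_\varepsilon(z)=z(1+\log^+z)^{\nu/\varepsilon}$ and $(\eta_\varepsilon\circ\xi)(z)\approx z^{q}(1+\log^+z)^{\nu(1+1/\varepsilon)}$. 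This replacement of the erroneous step by Theorem~\ref{teo: teorema principal} is precisely what introduces the factors $(1+1/\varepsilon)$ that appear in $\varphi_\varepsilon$, $\Phi_{\gamma,\varepsilon}$ and $\Psi_\varepsilon$.

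It then remains to rewrite the right-hand side. Using $w=u^{1/q}$ one has the identities $|h(x)|^{q}u(x)v^{q}(x)=|f(x)|^{r}\big(u^{1/q}(x)v(x)\big)^{r}$, $\int|f_0|^{r}=\int|f|^{r}(u^{1/q}v)^{r}$ and $1+q\gamma/n=q/r$, so that the power part of $(\eta_\varepsilon\circ\xi)(C\Lambda|h|)\,uv^{q}$, once integrated, collapses to $\Lambda^{q}\int|f|^{r}(u^{1/q}v)^{r}=\big(\int|f|^{r}(u^{1/q}v)^{r}\big)^{q/r}$. For the logarithmic factor one splits $\log^+(C\Lambda|h|)\lesssim 1+\log^+|f|+\log^+\!\big((u^{1/q}v)^{r/q-1}\big)+\log^+\Lambda$ and, via $(1+a+b+c)^{s}\lesssim(1+a)^{s}(1+b)^{s}(1+c)^{s}$, factors $(1+\log^+(C\Lambda|h|))^{\nu(1+1/\varepsilon)}$ into: a factor of $|f|$ which, together with $|f|^{r}$, rebuilds $\Phi_{\gamma,\varepsilon}(|f|)$ (the exponent $\delta(1+1/\varepsilon)q\gamma/n+\delta/\varepsilon$ absorbing the $\delta$ already in $\Phi$ and the $\gamma/n$--rescaling); a factor of the weight $u^{1/q}v$ carrying the negative power $r/q-1$, which rebuilds $\Psi_\varepsilon(u^{1/q}v)$ (hence the $\log^+(z^{1-q/r})$ there); and a factor depending only on $\Lambda$, i.e.\ only on $\int|f_0|^{r}$, which together with $\Lambda^{q}$ and the trivial bound $\int|f_0|^{r}\le\int\Phi_{\gamma,\varepsilon}(|f|)\Psi_\varepsilon(u^{1/q}v)$ produces exactly $\varphi_\varepsilon\big(\int\Phi_{\gamma,\varepsilon}(|f|)\Psi_\varepsilon(u^{1/q}v)\big)$, since $\varphi_\varepsilon(z)=z^{q/r}(1+\log^+z)^{\nu(1+1/\varepsilon)}$. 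Undoing the normalization $t=1$ completes the argument.

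The main obstacle is the bookkeeping in this last step: one must check that the single, now enlarged, logarithmic factor $(1+\log^+(\cdot))^{\nu(1+1/\varepsilon)}$ distributes consistently among $\varphi_\varepsilon$, $\Phi_{\gamma,\varepsilon}$ and $\Psi_\varepsilon$, keeping careful track of the negative exponent $r/q-1<0$ carried by the weight inside $h$ (which is what forces the form $\log^+(z^{1-q/r})$ in $\Psi_\varepsilon$) and of the fact that the global factor $\Lambda$ sits inside the logarithm, so that its contribution has to be matched against the outer function $\varphi_\varepsilon$ through $\int|f_0|^{r}\le\int\Phi_{\gamma,\varepsilon}(|f|)\Psi_\varepsilon(u^{1/q}v)$.
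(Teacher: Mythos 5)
Your argument is correct and follows essentially the same path as the paper: reduce to $M_\xi$ via Proposition~10 of \cite{B22Pot}, invoke the corrected mixed estimate for $\xi\in\mathfrak F_{q,\nu}$, and then redistribute the power and logarithmic contributions of $\eta_\varepsilon\circ\xi$ among $\Phi_{\gamma,\varepsilon}$, $\Psi_\varepsilon$ and $\varphi_\varepsilon$. The two places where you deviate are cosmetic: you apply Theorem~\ref{teo: teorema principal} directly with $v$ in the denominator rather than passing through $M_\xi v$ and Corollary~\ref{coro: corolario del teorema principal - 1} as in the text (in fact the cleaner move, since $v\le M_\xi v$ makes the set containment, as literally written in the paper, point the wrong way), and you distribute the logarithm term by term via $(1+a+b+c)^s\lesssim(1+a)^s(1+b)^s(1+c)^s$ rather than first writing the argument of $\eta_\varepsilon\circ\xi$ as an $(\cdot)^{r/q}$-power and invoking submultiplicativity of $\Phi_{\gamma,\varepsilon}$; both bookkeepings land in the same place.
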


\begin{proof}
	Set $\xi(z)=z^q(1+\log^+z)^\nu$, where $\nu=\delta q/r$. Thus $z^{\gamma/n}\xi^{-1}(z)\lesssim \Phi^{-1}(z)$. By applying Proposition 10 in \cite{B22Pot} with $p=r$ we have that
	\[M_{\gamma,\Phi}\left(\frac{f_0}{w}\right)(x)\leq C\left[M_\xi\left(\frac{f_0^{r/q}}{w}\right)\right](x)\left(\int_{\mathbb{R}^n}f_0^r(y)\,dy\right)^{\gamma/n}.\]
	By setting $f_0=|f|wv$ we can write
	\begin{align*}
	uv^{q}\left(\left\{x: \frac{M_{\gamma,\Phi}(fv)(x)}{v(x)}>t\right\}\right)&=uv^{q}\left(\left\{x: \frac{M_{\gamma,\Phi}(f_0/w)(x)}{v(x)}>t\right\}\right)\\
	&\leq uv^{q}\left(\left\{x: \frac{M_{\xi}(f_0^{r/q}/w)(x)}{M_\xi v(x)}>\frac{t}{\left(\int f_0^r \right)^{\gamma/n}}\right\}\right).
	\end{align*}
	Since $\xi\in \mathfrak{F}_{q,\nu}$, by Corollary~\ref{coro: corolario del teorema principal - 1} there exists $\varepsilon_0>0$ such that
	\begin{equation}\label{eq: eq1 - teo mixta para M_{gamma,Phi}, caso p=r}
	uv^{q}\left(\left\{x: \frac{M_{\gamma,\Phi}(fv)(x)}{v(x)}>t\right\}\right)\leq C_\varepsilon\int_{\mathbb{R}^n}(\eta_{\varepsilon}\circ\xi)\left(\frac{f_0^{r/q}\left(\int f_0^r\right)^{\gamma/n}}{wv t}\right)uv^q,
	\end{equation}
	for $0<\varepsilon<\varepsilon_0$ and being $\eta_\varepsilon(z)=z(1+\log^+z)^{\nu/\varepsilon}$. Fixed $\varepsilon$,
	the argument of $\eta_\varepsilon \circ \xi$ above can be written as
	\begin{align*}
	\frac{f_0^{r/q}\left(\int f_0^r\right)^{\gamma/n}}{wv t}&=\left(\frac{|f|}{t}\right)^{r/q}(wv)^{r/q-1}\left(\int_{\mathbb{R}^n} \left(\frac{|f|}{t}\right)^r(wv)^r\right)^{\gamma/n}\\
	&=\left[\left(\frac{|f|}{t}\right)(wv)^{1-q/r}\left(\int_{\mathbb{R}^n} \left(\frac{|f|}{t}\right)^r(wv)^r\right)^{\gamma q/(nr)}\right]^{r/q}.
	\end{align*}
	Observe that for $0\leq z\leq 1$, $(\eta_\varepsilon \circ\xi)(z^{r/q})=z^r$, and for $z>1$ we have
	\[(\eta_\varepsilon \circ \xi)(z^{r/q})\lesssim z^r(1+\log z)^{\nu(1+1/\varepsilon)},\]
	which implies that $(\eta_\varepsilon \circ \xi)(z^{r/q})\lesssim \Phi_{\gamma,\varepsilon}(z)=z^r(1+\log^+z)^{\nu(1+1/\varepsilon)}$, for every $z\geq 0$. Since $\Phi_{\gamma,\varepsilon}$ is submultiplicative, we can estimate as follows
	\begin{align*}
	(\eta_\varepsilon \circ \xi )\left(\frac{f_0^{r/q}\left(\int_{\mathbb{R}^n}f_0^r\right)^{\gamma/n}}{wv t}\right)&\leq \Phi_{\gamma,\varepsilon}\left(\left(\frac{|f|}{t}\right)(wv)^{1-q/r}\left(\int_{\mathbb{R}^n} \left(\frac{|f|}{t}\right)^r(wv)^r\right)^{\gamma q/(nr)}\right)\\
	&\leq \Phi_{\gamma,\varepsilon}\left(\left[\int_{\mathbb{R}^n}\Phi_{\gamma,\varepsilon}\left(\frac{|f|}{t}\right)(wv)^r\right]^{\gamma q/(nr)}\right)\Phi_{\gamma,\varepsilon}\left(\frac{|f|}{t}(wv)^{1-q/r}\right)
	\end{align*}
	Returning to \eqref{eq: eq1 - teo mixta para M_{gamma,Phi}, caso p=r} and setting $w=u^{1/q}$, the right hand side is bounded by
	\[ \Phi_{\gamma,\varepsilon}\left(\left[\int_{\mathbb{R}^n}\Phi_{\gamma,\varepsilon}\left(\frac{|f|}{t}\right)(wv)^r\right]^{\gamma q/(nr)}\right)\int_{\mathbb{R}^n} \Phi_{\gamma,\varepsilon}\left(\frac{|f|}{t}(wv)^{1-q/r}\right)(wv)^q.\]
	Notice that $\Phi_{\gamma,\varepsilon}(z^{1-q/r})z^q\leq \Psi_\varepsilon(z)$. Therefore, the expression above is bounded by 
	\[\Phi_{\gamma,\varepsilon}\left(\left[\int_{\mathbb{R}^n}\Phi_{\gamma,\varepsilon}\left(\frac{|f|}{t}\right)\Psi_\varepsilon(u^{1/q}v)\right]^{\gamma q/(nr)}\right)\int_{\mathbb{R}^n} \Phi_{\gamma,\varepsilon}\left(\frac{|f|}{t}\right)\Psi_\varepsilon(u^{1/q}v).\]
	To finish, observe that 
	\[z\Phi_{\gamma,\varepsilon}(z^{\gamma q/(nr)})\lesssim z^{1+\gamma q/n}(1+\log^+ z)^{\nu(1+1/\varepsilon)}= z^{q/r}(1+\log^+ z)^{\delta q(1+1/\varepsilon)/r}=\varphi_\varepsilon(z).\qedhere\]
\end{proof}

\def\cprime{$'$}
\providecommand{\bysame}{\leavevmode\hbox to3em{\hrulefill}\thinspace}
\providecommand{\MR}{\relax\ifhmode\unskip\space\fi MR }
% \MRhref is called by the amsart/book/proc definition of \MR.
\providecommand{\MRhref}[2]{%
  \href{http://www.ams.org/mathscinet-getitem?mr=#1}{#2}
}
\providecommand{\href}[2]{#2}

\end{document}